\title{A rigid hyperfinite type $\mathrm{II}_1$ factor}
\author{Ilijas Farah}
\address{Department of Mathematics and Statistics,
York University,
4700 Keele Street,
Toronto, Ontario, Canada, M3J
1P3} 
\address{Matemati\v cki Institut SANU\\
Kneza Mihaila 36\\
11\,000 Beograd, p.p. 367\\
Serbia}
\email{ifarah@yorku.ca}
\urladdr{http://www.math.yorku.ca/$\sim$ifarah}
\author{Ilan Hirshberg}
\address{Department of Mathematics, Ben Gurion University of the Negev, \phantom{----------------}\linebreak\text{}\hspace{3.5mm}
P.O.B. 653, Be'er Sheva 84105, Israel}
\email{ilan@math.bgu.ac.il}
\thanks{This research was supported by Israel Science Foundation grant 476/16 and by the Fields Institute (I.H.) and NSERC (I.F.).}
\theoremstyle{plain}
\newtheorem{Thm}{Theorem}
\newtheorem{Lemma}[Thm]{Lemma}
\newtheorem{Prop}[Thm]{Proposition}
\theoremstyle{definition}
\newtheorem{Rmk}[Thm]{Remark}
\newcommand{\N}{{\mathbb N}}
\newcommand{\Z}{{\mathbb Z}}
\newcommand{\Q}{{\mathbb Q}}
\newcommand{\cU}{{\mathcal U}}
\newcommand{\cV}{{\mathcal V}}
\newcommand{\cF}{{\mathcal F}}
\newcommand{\cO}{\mathcal{O}}
\newcommand{\ant}{\mathrm{Ant}}
\newcommand{\aut}{\mathrm{Aut}}
\newcommand{\inn}{\mathrm{Inn}}
\newcommand{\out}{\mathrm{Out}}
\newcommand{\eps}{\varepsilon}
\numberwithin{equation}{section}
\newcommand{\id}{\mathrm{id}}
\newcommand{\Ad}{\mathrm{Ad}}
\newcommand{\Sym}{\mathrm{Sym}}
\newcommand{\cstar}{$\mathrm{C}^*$}
\newcommand{\cst}{\mathrm{C}^*}
\newcommand{\doo}{\diamondsuit_{\aleph_1}}
\newcommand{\twoone}{$\mathrm{II}_1$}
\DeclareMathOperator{\Struct}{Struct}
\newcommand{\fA}{\mathfrak A}
\newcommand{\fB}{\mathfrak B}
\newcommand{\rs}{\upharpoonright}
\newcommand{\calL}{\mathcal L}
\newcommand{\sfS}{\mathsf S}
\newcommand{\sfC}{\mathsf C}
\newcommand{\bbX}{\mathbb X}
\DeclareMathOperator{\Xm}{\bbX}
\DeclareMathOperator{\XmR}{\bbX_{R}}
\DeclareMathOperator{\Code}{Code}
\newcounter{my_enumerate_counter}
\newcommand{\pushcounter}{\setcounter{my_enumerate_counter}{\value{enumi}}}
\newcommand{\popcounter}{\setcounter{enumi}{\value{my_enumerate_counter}}}
\begin{document}
\begin{abstract}
We show that it is relatively consistent with ZFC that there exists a hyperfinite type $\mathrm{II}_1$-factor of density character $\aleph_1$ which is not isomorphic to its opposite, does not have any outer automorphisms, and has trivial fundamental group.

\end{abstract}     

\maketitle

The goal of this paper is to prove the following theorem.

\begin{Thm}\label{Thm:Main} It is relatively consistent with ZFC that there exists a  \twoone{} factor that is not isomorphic to its opposite, has no outer automorphisms, has trivial fundamental group, is hyperfinite, and is of density character $\aleph_1$. 
Also, the fundamental group of its ultrapower associated with any nonprincipal ultrafilter on $\N$ is equal to $(0,\infty)$.  
\end{Thm}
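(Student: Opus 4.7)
My plan is to construct $M$ by transfinite induction along $\omega_1$ under an instance of Jensen's diamond principle (which is relatively consistent with ZFC and implies CH). I would produce a continuous inclusion chain $(M_\alpha)_{\alpha<\omega_1}$ of separable copies of the hyperfinite \twoone{} factor $R$, with unital, trace-preserving inclusions $M_\alpha\subseteq M_\beta$ for $\alpha<\beta$, and set $M:=\bigcup_{\alpha<\omega_1}M_\alpha$. Any such $M$ is automatically hyperfinite and, under CH, of density character exactly $\aleph_1$. The task is to arrange the successor steps so that $M$ has no outer automorphism, no isomorphism onto $M^{\mathrm{op}}$, and no isomorphism onto a nontrivial amplification $M_t$.

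The diamond side proceeds by coding separable \twoone{} factors and their morphisms into Polish spaces, in the spirit of the macros $\Xii$, $\Xaut$, $\Xant$, $\XautR$ from the preamble, and applying a diamond-for-von-Neumann-algebras principle $\doovNA$. This yields a sequence $(\varphi_\alpha)_{\alpha<\omega_1}$ of guesses such that, on a stationary set of $\alpha$, $\varphi_\alpha$ equals the restriction to $M_\alpha$ of any prescribed prospective automorphism, anti-automorphism, or amplification isomorphism of $M$; every potential obstruction is thereby caught stationarily often.

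At a successor stage $\alpha+1$, the nature of the kill depends on the type of $\varphi_\alpha$. For an anti-automorphism guess I would enlarge $M_\alpha$ to $M_{\alpha+1}\cong R$ containing a unitary whose joint $*$-moments with selected elements of $M_\alpha$ violate the anti-multiplicativity required of any extension. For an amplification guess I would adjoin a finite matrix subalgebra whose minimal projection carries a trace inconsistent with the scaling $t$. For the automorphism case, since the goal is to make any extension of $\varphi_\alpha$ inner rather than to destroy it, the natural choice is the crossed product $M_{\alpha+1}=M_\alpha\rtimes_{\varphi_\alpha}\Z$, which remains isomorphic to $R$ by amenability of $\Z$ and in which $\varphi_\alpha$ is implemented by the canonical unitary; simultaneously, along a cofinal subsequence of stages, I would adjoin elements enforcing the irreducibility $M_\alpha'\cap M=\C$. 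Given this irreducibility, two unitaries implementing the same $\varphi|_{M_\alpha}$ differ only by a scalar, so the implementing unitaries caught at the various stages assemble into a single global $u\in M$ with $\varphi=\Ad(u)$.

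The ultrapower assertion follows because $M$ is hyperfinite (hence injective), so $M^\cU$ is an injective, countably saturated \twoone{} factor; a standard intertwining-by-bimodule argument then yields $M^\cU\cong M^\cU\otimes R$, and combined with $R_t\cong R$ for every $t>0$ this produces $M^\cU\cong M^\cU_t$ and fundamental group $(0,\infty)$. The central obstacle, in my view, is the automorphism case: the cofinal irreducibility-enforcing steps must be engineered so as not to erase the unitaries added in previous automorphism kills or the structural obstructions added in the anti-automorphism and amplification kills, and the $\doovNA$-enumeration must be arranged so that each category of guess is addressed cofinally often without interference.
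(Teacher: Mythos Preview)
Your set-theoretic scaffolding (a $\doo$-guided tower $(M_\alpha)_{\alpha<\omega_1}$ of separable copies of $R$, catching restrictions of prospective maps on a stationary set) matches the paper's framework. The gap is entirely in the operator-algebraic content of the successor step, and it is serious enough that the argument, as written, does not go through.

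\textbf{The automorphism step.} Your plan is not to block $\varphi_\alpha$ but to make it inner by passing to $M_{\alpha+1}=M_\alpha\rtimes_{\varphi_\alpha}\Z$, and then to glue the implementing unitaries using irreducibility $M_\alpha'\cap M=\C$. This fails on two counts. First, passing to the crossed product does \emph{not} force every extension of $\varphi_\alpha$ to be inner: the automorphisms of $M_{\alpha+1}$ that restrict to $\varphi_\alpha$ on $M_\alpha$ are exactly $\Ad(u_\alpha)$ composed with the dual $\T$-action $u_\alpha\mapsto\lambda u_\alpha$, and for $\lambda\neq 1$ these are outer. So there is no obstruction to an outer $\varphi\in\aut(M)$ extending $\varphi_\alpha$. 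Second, the gluing requires $M_\alpha'\cap M=\C$ for the relevant $\alpha$, which you propose to enforce ``along a cofinal subsequence of stages''. But irreducibility at a single step $M_\gamma\subset M_{\gamma+1}$ does not propagate: $M_\alpha'\cap M_{\alpha+1}=\C$ and $M_{\alpha+1}'\cap M_{\alpha+2}=\C$ do not imply $M_\alpha'\cap M_{\alpha+2}=\C$, and enforcing irreducibility for large $\gamma$ says nothing about $M_\alpha'\cap M$ for small $\alpha$ (the inclusion of relative commutants goes the wrong way). You would need $M_\alpha'\cap M=\C$ for every $\alpha$ in the stationary set simultaneously, and nothing in the construction guarantees this.

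\textbf{The antiautomorphism and amplification steps.} Here you do try to block extensions, but the proposed mechanisms are either unspecified or incorrect, and crucially you never argue that the block is \emph{irreversible}. For the amplification case, ``adjoining a finite matrix subalgebra whose minimal projection carries a trace inconsistent with the scaling $t$'' does not make sense in a \twoone{} factor: projections of every trace in $[0,1]$ are already present, and any unital inclusion $M_\alpha\subset M_{\alpha+1}$ is trace-preserving, so you cannot manufacture a trace obstruction this way. For the antiautomorphism case, ``a unitary whose joint $*$-moments violate anti-multiplicativity'' might prevent an extension to the specific $M_{\alpha+1}$ you build, but you give no reason why it prevents extension to an arbitrary later hyperfinite $M_\beta\supset M_{\alpha+1}$. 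Without irreversibility, a $\diamond$-style construction collapses: the obstruction you plant at stage $\alpha+1$ can be undone at stage $\alpha+2$, and you have no control over the limit.

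This is precisely the point on which the paper's argument turns. The paper does \emph{not} try to make automorphisms inner; it kills outer automorphisms, antiautomorphisms, and corner-isomorphisms uniformly, and the kill is permanent. Given an outer $\beta$ (or an antiautomorphism, or a trace-scaling map), one produces $\alpha\in\aut(R)$ such that the images of $\alpha$ and $\beta^{-1}\alpha\beta$ in $\out(R)$ generate a free group, and sets $M_{\alpha+1}=M_\alpha\rtimes_\alpha\Z$. If $\beta$ extended to any hyperfinite $N\supset M_{\alpha+1}$, the canonical unitary $u$ and $v=\tilde\beta^{-1}(u)$ would generate a copy of $\cst_r(F_2)$ inside $N$, contradicting Brown's theorem that $\cst_r(F_2)$ does not embed in any hyperfinite finite von Neumann algebra. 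This irreversibility is what makes a single correct $\diamond$-guess suffice and removes any need for global irreducibility or coherence of implementing unitaries.

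\textbf{The ultrapower.} Your sketch (``injective and countably saturated, hence $M^\cU\cong M^\cU\overline\otimes R$ by intertwining'') is not obviously correct and in any case is not the paper's route. The paper observes that some $M_\alpha\cong R$ is an elementary substructure of $M$ (Downward L\"owenheim--Skolem), so under CH the saturated ultrapowers $R^\cU$ and $M^\cU$ are isomorphic, whence $\cF(M^\cU)=\cF(R^\cU)=(0,\infty)$.
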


Type $\mathrm{II}_1$ and type $\mathrm{III}$ factors with separable preduals which are not isomorphic to their opposites were constructed by Connes in \cite{Connes-not-anti-isomorphic,Connes-comptes-rendus}. An example of a type $\mathrm{II}_1$ factor with separable predual which has has no outer automorphisms and has trivial fundamental group was constructed in \cite{Ioana-Peterson-Popa}. Those examples of course are not hyperfinite. Theorem~\ref{Thm:Main} provides a factor constructed as a transfinite inductive limit of copies of the hyperfinite $\mathrm{II}_1$ factor which exhibits those properties.

Another curious property of the \twoone{} factor constructed in Theorem~\ref{Thm:Main} is that the fundamental group of its ultrapower is strictly larger than the closure of its fundamental group. To the best of our knowledge, this is the first example of a \twoone{} factor with this property; it is not known whether a factor with separable predual can have this property.  

We prove that the conclusion of Theorem~\ref{Thm:Main} follows from Jensen's~$\doo$. This axiom was first applied to  operator algebras in \cite{AkeWe:Consistency} in order to construct a counterexample to Naimark's problem. The Akemann--Weaver construction was subsequently adapted in \cite{farah2016simple} to construct a simple nuclear \cstar-algebra which is not isomorphic to its opposite. Those techniques were further refined in \cite{vaccaro2017trace}. While the set theoretic machinery we use here is similar to  the one used in those papers (albeit somewhat simplified), the operator algebraic techniques turn out to be very different in nature.  The results in \cite{AkeWe:Consistency,farah2016simple,vaccaro2017trace} rely on studying the action of outer automorphisms and antiautomorphisms on the pure states of a separable \cstar-algebra, and use in an essential way results due to Kishimoto (\cite{kishimoto-1981}) and work of 
 Kishimoto, Ozawa, and Sakai (\cite{KiOzSa}) about the homogeneity of the pure state space of separable \cstar-algebras. Beyond the fact that pure states of von Neumann algebras are generally not normal, the homogeneity result of Kishimoto--Ozawa--Sakai breaks down for non-separable \cstar-algebras, and in particular for type $\mathrm{II}_1$-factors (\cite[Remark 2.3]{KiOzSa}).
  Theorem~\ref{Thm:Main} is the first application of $\doo$ to von Neumann algebras, and it answers the question stated in~\cite[Remark~3.3]{farah2016simple}. Nonisomorphic hyperfinite \twoone{}factors with preduals of the same uncountable density character were first constructed in \cite{Wid:Nonisomorphic}. In \cite[Theorem~1.3]{FaKa:NonseparableII} it was proved that for every uncountable cardinal $\kappa$ there are $2^\kappa$ nonisomorphic hyperfinite \twoone{} factors with predual of density character $\kappa$.  In spite of being nonisomorphic, all of these factors constructed in a similar manner and they are unlikely to have any of the properties of the factor constructed in Theorem~\ref{Thm:Main}. 

We briefly outline the idea of the construction. We construct our factor~$M$ as a transfinite inductive limit of copies of the hyperfinite \twoone~ factor $R$, indexed as  $R_{\xi}$ for  countable ordinals $\xi$. Suppose we want to make sure that~$M$ does not have any outer automorphisms. (The idea for eliminating antiautomorphisms and isomorphisms into corners is similar.) For any automorphisms of $M$ there exist ``many'' ordinals $\xi$ such that the automorphism leaves $R_{\xi}$ invariant and the restriction is outer. Our inductive step does this: given an outer automorphism $\beta$ of $R_\xi$ for some $\xi$, we find a way to embed $R_\xi$ into a larger copy of $R$, denoted $R_{\xi+1}$, which has the property that $\beta$ cannot extend not only to $R_{\xi+1}$, but in fact to any larger hyperfinite \twoone~factor containing $R_{\xi+1}$. This allows us at each step to kill off a possible restriction of an automorphism of the yet-to-be constructed inductive limit. To make sure that we eliminate all possible outer automorphisms of $M$, we need a prediction device which should tell us which automorphism to handle at each stage; for that we use Jensen's $\diamondsuit_{\aleph_1}$ axiom, which is known to be relatively consistent with ZFC.

\subsection*{Acknowledgments} We are indebted to Sorin Popa for remarks on the original draft of this paper. We also thank the referee for some helpful suggestions.

\section{The obstructions}

 In this section we describe the device used to create obstructions to extending outer automorphisms,  antiautomorphisms, and isomorphisms onto a corner associated to a projection of trace not equal to 1 of subfactors of the \twoone{} factor.  This  is used in the proof of Theorem~\ref{Thm:Main}. 

For a type \twoone{} factor $M$, we denote  the set of all anti\-auto\-mor\-phisms of $M$ by $\ant(M)$. Note that $\ant(M) \cup \aut(M)$ is a group.

Let $G$ be a group, and let $g,h \in G$. Let $a$ and $b$ be the standard generators of $F_2 = \Z * \Z$. By $\pi_{g,h} \colon \Z * \Z \to G$ we denote the canonical homomorphism which satisfies $\pi_{g,h}(a)=g$ and $\pi_{g,h}(b)=h$.   Notice that if $\alpha \in \aut(M)$ and $\beta \in \ant(M) \cup \aut(M)$ then for any $w \in \left < a , b^{-1}ab \right >$, we have $\pi_{\alpha,\beta}(w) \in \aut(M)$. As usual,  by $\inn(M)$ we denote the group of  inner automorphisms of $M$, and $\out(M) = \aut(M)/\inn(M)$. The group $\aut(M)$ is a topological group when endowed with the point-$\|\cdot\|_2$-topology, that is, the topology which is generated by open sets of the form:
\[
\cO_{F,\eps,\alpha} = \{\varphi \in \aut(M) \mid  \forall a \in F,  \|\varphi(a) - \alpha(a) \|_2 + \|\varphi^{-1}(a) - \alpha^{-1}(a) \|_2 < \eps   \}
\]
for $\alpha \in \aut(M)$,  a finite set of contractions $F \subset M$, and $\eps>0$. If $M$ has separable predual then $\aut(M)$ is a Polish group with this topology (see e.g., \cite[\S 7.5.2]{anantharaman2017introduction}, where it was observed that $\aut(M)$ is isomorphic to a closed subgroup of the unitary group of  $L^2(M,\tau)$ equipped with the strong operator topology).
When $M$ is the hyperfinite \twoone{} factor $R$, it follows from the classification of automorphisms of $R$ from \cite{Connes-1975} that  the inner automorphisms are dense in $\aut(M)$.

Our first goal is to prove Theorem \ref{Thm:free group}. We need a few lemmas; the first of which, for the case of automorphisms, is an immediate application of the Connes' Rokhlin-type theorem. Fix a free ultrafilter $\cV$. By $R^{\cV}$ we denote the tracial ultrapower,  $l^{\infty}(\N,R) /\{ f \in l^{\infty}(\N,R) \mid \lim_{n \to \cV}\|f(n)\|_2 = 0 \}$. If $\alpha$ is an automorphism of $R$, by abuse of notation, we use $\alpha$ to denote both the induced automorphism of $R^{\cV}$ and of the central sequence algebra $R^{\cV} \cap R'$. 

\begin{Lemma}
	\label{Lemma:Rokhlin projections}
	Suppose $\beta$ is either an outer automorphism or an anti\-au\-to\-mor\-phism of $R$. Then there exist orthogonal projections $p_0,p_1 \in R^{\cV} \cap R'$ such that $\tau(p_0) = \tau(p_1) \geq 1/3$ and $\beta(p_0) = p_1$. 
\end{Lemma}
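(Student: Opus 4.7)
I split into the automorphism and antiautomorphism cases, which require rather different ingredients.

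For outer $\beta \in \aut(R)$, the set $\{n \in \Z : \beta^n \in \inn(R)\}$ is a subgroup of $\Z$ missing $1$, so it has the form $n_0\Z$ with $n_0 \geq 2$ (or is $\{0\}$); in particular, at least one of $\beta^2, \beta^3$ is outer. Picking $n \in \{2,3\}$ with $\beta^n$ outer, an application of Connes' Rokhlin-type theorem yields pairwise orthogonal projections $e_0, \dots, e_{n-1} \in R^{\cV} \cap R'$ summing to~$1$, with $\tau(e_i) = 1/n$ and $\beta(e_i) = e_{(i+1) \bmod n}$. Setting $p_0 := e_0$ and $p_1 := e_1 = \beta(p_0)$ gives the conclusion with $\tau(p_0) = \tau(p_1) = 1/n \geq 1/3$.

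For $\beta \in \ant(R)$, my plan is to construct a self-adjoint unitary $v$ in the central sequence algebra $N := R^{\cV} \cap R'$ with $\tau(v) = 0$ and $\beta(v) = -v$; then $p_0 := (1+v)/2$ and $p_1 := (1-v)/2 = \beta(p_0)$ are orthogonal projections of trace $1/2$. It suffices to exhibit a non-zero self-adjoint $x \in N$ with $\beta(x) = -x$ and $|x|$ of full support: $v := \mathrm{sgn}(x)$, defined via Borel functional calculus, is then a self-adjoint unitary, and since $\beta$ commutes with continuous functional calculus of a single self-adjoint element, $\beta(v) = \mathrm{sgn}(-x) = -v$, while $\beta$ swaps the positive and negative spectral projections of $x$, forcing $\tau(v) = 0$.

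The existence of such $x$ splits by the nature of $\beta^2 \in \aut(R)$. If $\beta^2$ is inner, I would first modify $\beta$ by an inner automorphism (via a standard cocycle computation using the vanishing of the relevant cohomology for $R$) to arrange $\beta^2 = \id$; then the $(-1)$-eigenspace of $\beta$ on $N$ is non-trivial, since otherwise $\beta|_N = \id$ would force $N$ to be abelian, contradicting that $N$ is a $\mathrm{II}_1$ factor. A non-zero self-adjoint element in this eigenspace can be amplified, using the McDuff property $N \cong N \otimes R$, so that $|x|$ has full support. If instead $\beta^2$ is outer, this approach breaks down: any $\beta$-anti-fixed self-adjoint is $\beta^2$-fixed, and the $\beta^2$-fixed subalgebra of $N$ may reduce to scalars when $\beta^2$ acts pointwise outer on the central sequence algebra. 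In that case I would invoke a Rokhlin-type theorem for the $\Z$-action $n \mapsto \beta^n$ on $R$ (whose even powers are automorphisms and odd powers are antiautomorphisms) to directly produce orthogonal projections $e_0, e_1, e_2 \in N$ summing to $1$ with $\beta(e_i) = e_{(i+1) \bmod 3}$ and $\tau(e_i) = 1/3$.

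The main obstacle is exactly the antiautomorphism case when $\beta^2$ is outer: there is no off-the-shelf Rokhlin theorem for antiautomorphisms in the literature, so one must either adapt the Connes--Ocneanu construction to the mixed automorphism/antiautomorphism setting, or bridge to an ordinary automorphism case via an auxiliary algebra such as $R \otimes R^{\mathrm{op}}$. I expect the bulk of the work to lie in this step.
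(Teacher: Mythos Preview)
Your criterion for the tower height is wrong: requiring $\beta^n$ to be outer is not the hypothesis of Connes' Rokhlin lemma, and it does not guarantee a height-$n$ Rokhlin tower in $R^{\cV}\cap R'$. Concretely, if $n_0=2$ then $\beta^2$ is inner and $\beta^3$ is outer, so your rule selects $n=3$; but $\beta^2$, being inner, acts as the identity on $R^{\cV}\cap R'$, and any putative height-$3$ tower would then satisfy $e_2=\beta^2(e_0)=e_0$, contradicting orthogonality. (The case $n_0=3$ fails symmetrically: your rule selects $n=2$, but $\beta$ has order exactly $3$ on $R^{\cV}\cap R'$, and $\beta(e_1)=e_0$ together with $\beta^3=\id$ there forces $e_1=\beta(e_0)=\beta^4(e_0)=e_0$.) The paper instead splits on whether $\beta$ has infinite or finite order in $\out(R)$: in the aperiodic case it invokes Connes' Rokhlin lemma with $n=2$, while for finite order $p$ it uses the classification of periodic automorphisms (cocycle conjugacy to a tensor-shift model) to obtain a full height-$p$ cyclic tower $q_0,\dots,q_{p-1}$ and then takes $p_0$ to be the sum of every other $q_i$, giving $\tau(p_0)\geq 1/2-1/(2p)\geq 1/3$.

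\textbf{Antiautomorphism case.} Your plan here is far more elaborate than necessary and, as you yourself note, incomplete when $\beta^2$ is outer. The paper handles all antiautomorphisms uniformly via a single citation to Giordano: for any antiautomorphism $\beta$ of $R$ and any $n$, there is (after conjugation by an automorphism, which does not change the action on $R^{\cV}\cap R'$) a unital copy of $M_n\subset R^{\cV}\cap R'$ on which $\beta$ acts as the transpose map. With $n=2$, the two rank-one projections $\tfrac12\left(\begin{smallmatrix}1&i\\-i&1\end{smallmatrix}\right)$ and $\tfrac12\left(\begin{smallmatrix}1&-i\\i&1\end{smallmatrix}\right)$ are exchanged by the transpose, yielding orthogonal $p_0,p_1$ with $\beta(p_0)=p_1$ and $\tau(p_0)=\tau(p_1)=1/2$ directly. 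No case split on $\beta^2$, no eigenspace analysis, and no ad hoc Rokhlin theory for antiautomorphisms is required.
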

\begin{proof}
	If $\beta$ is an outer automorphism and has infinite order in $\out(R)$, this follows from the Connes' Rokhlin-type theorem, \cite[Chapter XVII, Lemma 2.3]{Takesaki-III}, where we pick $n=2$. If $\beta$ has finite order in $\out(R)$, then by \cite[Chapter XVII, Theorem 2.10]{Takesaki-III}, the automorphism $\beta$ is cocycle conjugate to an automorphism of the form $\beta \otimes \sigma_{p}$, where $p$ is the period of $\beta$ in
	$\out(R)$ and $\sigma_p$ is an infinite tensor product action on $\overline{\bigotimes}_{1}^{\infty}M_p$ of cyclic permutations. This has a central sequence of projections which are permuted cyclically. Therefore, there exist projections $q_0,q_1,\ldots,q_{p-1} \in R^{\cV}\cap R'$ such that $\beta (q_j) = q_{j+1\mod p}$ for all $j$. If $p$ is even, set $p_0 = q_0 + q_2 + \ldots + q_{p-2}$, and if $p$ is odd then set $p_0 = q_0 + q_2 + \ldots q_{p-3}$, and set $p_1 = \beta(p_0)$. If $p$ is even then $\tau(p_0)=\tau(p_1) = 1/2$, and if $p$ is odd then $\tau(p_0) = \tau(p_1) = 1/2 - 1/2p \geq 1/3$, as required. 
	
	If $\beta$ is an antiautomorphism, by \cite[Lemma 2.1]{Giordano}, up to conjugation by an automorphism, for any $n \in \N$ there exists a unital copy of $M_n \subset R^{\cV} \cap R'$ such that $\alpha|_{M_n}$ is given by the transpose map, that is, $\alpha|_{M_n}(e_{jk}) = e_{kj}$ for the standard matrix units $\{e_{jk}\}_{k,j=1,2,\ldots,n}$. Set $n=2$, then the projections
		\[
		p_0 = \left ( 
				\begin{matrix}
				1/2 & i/2 \\	
				-i/2 & 1/2
				\end{matrix}
			\right ) 
			\quad , \quad
		p_1 = \left ( 
					\begin{matrix}
					1/2 & -i/2 \\	
					i/2 & 1/2
					\end{matrix}
				\right ) 
		\]
		satisfy the requirements.
\end{proof}

\begin{Lemma}
	\label{Lemma:mixing up Rokhlin projections}
	 Let $\beta$ be an outer automorphism or an antiautomorphism of~$R$.  Let $w \in \left < a , b^{-1}ab \right >$ be a nontrivial element. Then there exist a unitary $u \in U(R^{\cV} \cap R')$ and a projection $p \in R^{\cV} \cap R'$ with $\tau(p)\geq 1/3$ such that $\pi_{\Ad(u),\beta}(w)(p) \perp p$.
\end{Lemma}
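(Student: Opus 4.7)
My plan is to first rewrite the conclusion in a form amenable to constructing $u$. Using the identities $\beta^{-1}\Ad(u)\beta=\Ad(\beta^{-1}(u))$ when $\beta\in\aut(R)$ and $\beta^{-1}\Ad(u)\beta=\Ad(\beta^{-1}(u^*))$ when $\beta\in\ant(R)$, the composition $\pi_{\Ad(u),\beta}(w)$ simplifies to $\Ad(W(u))$ for an explicit unitary $W(u)\in R^{\cV}\cap R'$ which is a word in $u^{\pm 1}$ and $\beta^{-1}(u)^{\pm 1}$ (or $\beta^{-1}(u^*)^{\pm 1}$ in the antiautomorphism case). This is consistent because every element of $\lb a, b^{-1}ab\rb$ has zero $b$-exponent sum, so $\pi_{\Ad(u),\beta}(w)$ is automatically an automorphism of $R^{\cV}\cap R'$ and in fact inner. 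Setting $p:=p_0$, the projection produced by Lemma~\ref{Lemma:Rokhlin projections}, the desired conclusion $\pi_{\Ad(u),\beta}(w)(p)\perp p$ becomes the operator equation $p_0 W(u) p_0=0$.

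Next, I would strengthen Lemma~\ref{Lemma:Rokhlin projections} by building more algebraic structure inside $R^{\cV}\cap R'$. Using Connes' Rokhlin theorem in the outer automorphism case (invoking the cocycle-conjugacy model $\beta\otimes\sigma_p$ when $\beta$ has finite order in $\out(R)$) and Giordano's lemma in the antiautomorphism case, one embeds a unital copy of $M_n\subset R^{\cV}\cap R'$, with $n$ chosen depending on $w$, on which $\beta$ acts as a concrete model: a cyclic permutation of matrix units in the outer automorphism case, and the transpose in the antiautomorphism case. The projection $p_0$ is arranged as a sum of diagonal matrix units whose trace is $\geq 1/3$.

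The task then becomes to choose $u\in U(M_n)$ so that $W(u)$ annihilates $p_0$ on both sides. Taking $u$ to be a permutation matrix $u_\pi$ reduces this to a combinatorial question in $S_n$: find $\pi\in S_n$ such that the permutation $\Phi_w(\pi)$ obtained by the substitution $a\mapsto\pi$, $b^{-1}ab\mapsto\sigma^{-1}\pi\sigma$ (where $\sigma$ encodes $\beta$) sends the index set of $p_0$ into its complement. The availability of such $\pi$ relies on the fact that $\lb a, b^{-1}ab\rb$ is a free subgroup of rank $2$ in $F_2$, so the word map $\Phi_w$ is genuinely non-trivial for every non-trivial~$w$.

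The main obstacle is this combinatorial existence. For specific $w$ (for instance $w=ac$ evaluated in small $S_n$, where a parity constraint can appear) the naive choice of $\pi$ forces $\Phi_w(\pi)$ to preserve $p_0$, so $n$ must be enlarged and $\pi$ chosen more carefully using the rank-$2$ freeness. An alternative route that bypasses the combinatorial search altogether is to take $u\in R^{\cV}\cap R'$ to be a Haar unitary $*$-free from its $\beta$-translates; such unitaries exist in abundance inside the type~II$_1$ factor $R^{\cV}\cap R'$, and then $W(u)$ lies in a concrete copy of $L(\F_2)$ generated by two free Haar unitaries, where a projection $p$ of trace $\geq 1/3$ with $p W(u) p=0$ can be produced by a direct free-probability argument.
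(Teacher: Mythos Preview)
Your reformulation of $\pi_{\Ad(u),\beta}(w)$ as $\Ad(W(u))$ is correct, but the proposal stalls exactly where you say it does, and neither route you sketch closes the gap. In the permutation-matrix approach there is a concrete failure in the antiautomorphism case: if $\beta$ acts as the transpose on $M_n$ and $u=u_\pi$ is a (real) permutation matrix, then $\beta^{-1}(u^*)=(u_\pi^*)^T=u_\pi$, so both $a$ and $b^{-1}ab$ are sent to $\Ad(u_\pi)$ and every word collapses to a power of a single permutation; ``choosing $\pi$ more carefully'' cannot fix this. Even in the automorphism case you have only restated the problem as a combinatorial existence question in $S_n$ without solving it. The free-probability alternative has two unjustified assertions: the existence of a Haar unitary $u\in R^{\cV}\cap R'$ that is $*$-free from $\beta^{-1}(u)$ is a statement about a \emph{fixed} (anti)automorphism $\beta$ and needs an argument, and even if granted, finding $p$ of trace $\geq 1/3$ with $pW(u)p=0$ is not automatic (no such $p$ lies in $W^*(W(u))$, for instance). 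Finally, fixing $p:=p_0$ at the outset is too rigid; the paper lets $p$ depend on $w$.

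The idea you are missing is residual finiteness. The map $\langle a,b\rangle\to\Z*\Z_2$ is injective on $\langle a,b^{-1}ab\rangle$, and $\Z*\Z_2$ is residually finite, so for the given $w$ there is a homomorphism to a finite $\Sym(S)$ in which the image $\sigma_b$ of $b$ has order~$2$ with no fixed points and the image $\sigma_w$ of $w$ is fixed-point-free. One then subdivides $p_0\oplus p_1$ from Lemma~\ref{Lemma:Rokhlin projections} into equivalent subprojections $\{q_s\}_{s\in S}$ so that $\beta$ permutes them according to $\sigma_b$, and chooses $u\in U(R^{\cV}\cap R')$ (not a permutation matrix in a small $M_n$, just any unitary in the ambient \twoone{} factor) permuting the $q_s$ according to $\sigma_a$. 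Then $\pi_{\Ad(u),\beta}(w)$ permutes the $q_s$ by $\sigma_w$, and $p$ is taken to be the sum of $q_s$ over half of $S$ chosen so that $\sigma_w$ carries it onto the other half. This supplies exactly the finite combinatorial model you were looking for, uniformly in $w$.
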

	\begin{proof}
		The canonical homomorphism $\Z \to \Z_2$ gives rise to a homomorphism $\varphi \colon \left < a , b \right > \to \Z * \Z_2$, such that $\varphi|_{\left < a , b^{-1}ab \right >}$ is injective. Since $\Z * Z_2$ is residually finite, we can pick a homomorphism  $\psi \colon \Z * \Z_2 \to \Sym(S)$ into a symmetry group of a finite set $S$, such that any nontrivial element in the image has no fixed points and such that $\psi(\varphi(w)) \neq 1$. Set $\sigma_{a} = \psi \circ \varphi(a)$, $\sigma_b =  \psi \circ \varphi(b)$ and  $\sigma_w = \psi \circ \varphi(w)$. Since $\sigma_b$ is of order 2 with no fixed points, we can decompose $S$ into a disjoint union $S_0 \sqcup S_1$ where $\sigma_b|_{S_0} \colon S_0 \to S_1$ is a bijection. Now, let $p_0,p_1$ be projections as in Lemma \ref{Lemma:Rokhlin projections}. Since $ R^{\cV} \cap R'$ is a \twoone~ factor (see \cite[Chapter XIV, Theorem 4.18]{Takesaki-III}), we can decompose $p_0$ into a direct sum of equivalent orthogonal projections $p_0 = \bigoplus_{s \in S_0} q_s$. For $s \in S_1$, set $q_s = \beta(q_{\sigma_b^{-1}(s)})$. Then $q_s \leq p_1$ and we have $p_1 = \bigoplus_{s \in S_1} q_s$. Since the projections $\{q_s\}_{s \in S}$ are pairwise equivalent and orthogonal and $R^\cV\cap R'$ is a factor, there exists a unitary $u \in R^{\cV} \cap R'$ such that $\Ad(u)(q_s) = q_{\sigma_a(s)}$ for all $s \in S$. Thus, for any $s \in S$, $\pi_{\Ad(u),\beta}(w)(q_s) = q_{\sigma_w(s)}$. Since the permutation $\sigma_w$ has no fixed points, and $S$ is even, we can find a subset $S' \subset S$ consisting of half the points such that $S = S' \sqcup \sigma_w(S')$. Set $p = \sum_{s \in S'}q_s$, then $\pi_{\Ad(u),\beta}(w)(p) \perp p$, and $\tau(p) = \tau(p_0) \geq 1/3$, as required.
	\end{proof}


\begin{Lemma} 
	\label{Lemma:making-automorphism-free}
	Let  $\beta$ be an outer automorphism or an antiautomorphism of $R$.
	Let $w \in \left <a , b^{-1}ab \right > < \left < a,b \right >$ be a nontrivial element. 
	  Then for any $\delta>0$, for any finite set of unitaries $\cU_0 \subset U(R)$ and for any nonempty open   $\cO\subseteq U(R)$  there exist an automorphism $\alpha' \in \cO$ and a projection $p \in R$ such that for every $z \in \cU_0$ we have $\| \pi_{\alpha,\beta}(w) (p) - zpz^*\|_2^2>2/3 - \delta$. 
\end{Lemma}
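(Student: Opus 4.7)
The plan is to derive this from Lemma \ref{Lemma:mixing up Rokhlin projections} by combining a lifting from the ultrapower $R^\cV$ down to $R$ with density of inner automorphisms in $\aut(R)$. First, fix some $v \in U(R)$ with $\Ad(v) \in \cO$ (any nonempty open set of $\aut(R)$ contains such an automorphism because the inner automorphisms are dense in $\aut(R)$). Apply Lemma \ref{Lemma:mixing up Rokhlin projections} to produce $u \in U(R^\cV \cap R')$ and a projection $q \in R^\cV \cap R'$ with $\tau(q) \geq 1/3$ and $\pi_{\Ad(u), \beta}(w)(q) \perp q$ in $R^\cV$, where $\beta$ is extended to an (anti)automorphism of $R^\cV$ in the usual way.

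The central observation is that replacing $u$ by $\tilde u := vu$ does not change the word's action on $R^\cV \cap R'$: for any $x \in R^\cV \cap R'$ the element $uxu^*$ again lies in $R^\cV \cap R'$ and therefore commutes with $v \in R$, so
$$
\Ad(\tilde u)(x) = v(uxu^*)v^* = uxu^* = \Ad(u)(x).
$$
Since $\beta$ preserves $R^\cV \cap R'$ and the iterated images of $q$ under $\Ad(\tilde u)^{\pm 1}$ and $\beta^{\pm 1}$ remain there, an induction on the length of $w$ yields $\pi_{\Ad(\tilde u), \beta}(w)(q) = \pi_{\Ad(u), \beta}(w)(q) \perp q$. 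Consequently $\|\pi_{\Ad(\tilde u), \beta}(w)(q) - q\|_2^2 = 2\tau(q) \geq 2/3$, and since $zqz^* = q$ holds in $R^\cV$ for every $z \in R$, the same conclusion is valid with $zqz^*$ in place of $q$ for each $z \in \cU_0$.

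Now lift $u$ to a sequence of unitaries $(u_n) \subset U(R)$ and $q$ to a sequence of projections $(p_n) \subset R$, and set $\tilde u_n := v u_n$. Because $w$ has fixed finite length, $\pi_{\Ad(\tilde u), \beta}(w)(q)$ is represented by the sequence $\bigl(\pi_{\Ad(\tilde u_n), \beta}(w)(p_n)\bigr)_n$, so for each fixed $z \in \cU_0$
$$
\lim_{n \to \cV} \|\pi_{\Ad(\tilde u_n), \beta}(w)(p_n) - z p_n z^*\|_2^2 = 2\tau(q) \geq 2/3.
$$
Moreover, centrality of $u$ gives $\Ad(u_n) \to \id$ and hence $\Ad(\tilde u_n) = \Ad(v) \circ \Ad(u_n) \to \Ad(v) \in \cO$ in the point-$\|\cdot\|_2$ topology. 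Thus, for $n$ sufficiently large along $\cV$, both $\Ad(\tilde u_n) \in \cO$ and $\|\pi_{\Ad(\tilde u_n), \beta}(w)(p_n) - z p_n z^*\|_2^2 > 2/3 - \delta$ hold simultaneously for every $z$ in the finite set $\cU_0$; fixing such an $n$ and setting $\alpha' := \Ad(\tilde u_n)$ and $p := p_n$ finishes the proof.

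The main subtlety is the absorption step: that $v \in R$ can be inserted into the central element $u$ without disturbing the value of $\pi_{\cdot, \beta}(w)$ at $q$. This hinges on the fact that $q$ and all of its iterated images under $\Ad(u)^{\pm 1}$ and $\beta^{\pm 1}$ stay in the commutant of $R$ inside $R^\cV$, which is what renders $v$ ``invisible'' to the computation.
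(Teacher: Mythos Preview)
Your proof is correct and follows essentially the same approach as the paper's: both use Lemma~\ref{Lemma:mixing up Rokhlin projections}, multiply the central unitary $u$ by a fixed $v\in U(R)$ with $\Ad(v)\in\cO$, observe that this perturbation is invisible on $R^{\cV}\cap R'$, and then lift and select an index along $\cV$. The only cosmetic difference is that the paper takes $\gamma_n=\Ad(u_n)\circ\Ad(v)=\Ad(u_nv)$ whereas you take $\Ad(vu_n)$; either order works since $v$ acts trivially on $R^{\cV}\cap R'$.
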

\begin{proof}
Since  approximately inner automorphisms are dense in $\aut(R)$, we may assume that for a finite set of contractions $F$, a given $\eps>0$, and a given $v\in U(R)$, the open set $\cO$ is of the form 
\[
\cO_{F,\eps,v} = \{\varphi \in \aut(R) \mid  (\forall a \in F)  \|\varphi(a) - \Ad v(a) \|_2 +  \|\varphi^{-1}(a) - \Ad v^*(a) \|_2 < \eps   \}
\]
 By Lemma \ref{Lemma:mixing up Rokhlin projections} there  are a unitary $u$  and a projection $p$ in $R^{\cV} \cap R'$ 
such that  $\tau(p)\geq 1/3$ and $\pi_{\Ad(u),\beta}(w)(p) \perp p$.  Lift $u$ to a sequence $(u_1,u_2,\ldots)$ in $U(l^{\infty}(R))$, and lift $p$ to a sequence of projections $(p_1,p_2,\ldots)$ in  $l^{\infty}(R)$. Set $\gamma_n = \Ad(u_n) \circ \Ad(v)$. Notice that automorphism $\gamma = (\gamma_1,\gamma_2,\ldots)$ of $l^{\infty}(R)$ descends to the automorphism $\Ad(u)$ of $R^{\cV} \cap R'$. Thus, for any $z \in \cU_0$ we have $\|zp_nz^* - p_n\|_2 \to 0$ and $\lim_{n \to \cV} \|\pi_{\gamma_n,\beta}(w)(p_n) \cdot p_n\|_2=0$.  Therefore 
\[
\lim_{n \to \cV} \|\pi_{\gamma_n,\beta}(w)(p_n) - zp_nz^*\|_2^2 = 2\tau(p) \geq 2/3.
\]
 Furthermore, since $\lim_{n \to \cV} \| [u_n,x] \|_2 = \lim_{n \to \cV} \| [u_n^*,x] \|_2 = 0$ for any $x \in F \cup \{v\}$, we have 
 \[
 \lim_{n \to \cV} \|\gamma_n(x) - \Ad(v)(x)\|_2 = \lim_{n \to \cV} \|\gamma_n^{-1}(x) - \Ad(v^*)(x)\|_2 = 0
 \]
 for all $x \in F$; in particular, the set of $n$ such that  $\gamma_n \in \cO_{F,\eps,v}$ belongs to $\cV$. 
Thus, we can pick an index $n$ such that for all $z\in \cU_0$ we have $\|\pi_{\gamma_n,\beta}(w)(p_n) - zp_nz^*\|_2^2 \geq 2/3 - \delta$ and $\gamma_n \in \cO$; we now set $p = p_n$, $\alpha' = \gamma_n$ and we are done.
\end{proof}

The following result is similar in spirit to the freeness results in \cite[Lemma~A.2]{Ioana-Peterson-Popa} (for outer automorphisms) and \cite[Remark~A.3 (2)]{Ioana-Peterson-Popa} (for antiautomorphisms), but it does not obviously follow from them (and they don't follow from our result).  

\begin{Thm}
	\label{Thm:free group}
	Let $\beta$ be an outer automorphism or an antiautomorphism of the hyperfinite $\mathrm{II}_1$ factor. Then there exists $\alpha \in \aut (R)$ such that the images of the automorphisms $\alpha$ and $\beta^{-1} \circ \alpha \circ \beta$ in $\out(R)$ generate a free group.
\end{Thm}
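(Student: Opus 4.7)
My plan is to apply the Baire category theorem in the Polish group $\aut(R)$. Since $\{a, b^{-1}ab\}$ freely generates a rank-$2$ free subgroup of $F_2$, it suffices to find a single $\alpha \in \aut(R)$ such that $\pi_{\alpha, \beta}(w)$ is non-inner for every nontrivial $w \in \langle a, b^{-1}ab\rangle$; since there are only countably many such $w$, I would produce a dense $G_\delta$ set of good $\alpha$ for each fixed $w$ and then take the countable intersection.

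For a fixed nontrivial $w$, first fix a countable $\|\cdot\|_2$-dense subset $\{z_j : j \geq 1\}$ of $U(R)$ and, for each $n \geq 1$, consider
\[
V_w(n) = \{\alpha \in \aut(R) : (\exists \text{ projection } p \in R)\, (\forall j \leq n)\, \|\pi_{\alpha,\beta}(w)(p) - z_j p z_j^*\|_2^2 > 2/3 - 1/n\}.
\]
Since $\pi_{\alpha,\beta}(w)$ depends continuously on $\alpha$ in the point-$\|\cdot\|_2$ topology (with $\beta$ and $p$ fixed), each $V_w(n)$ is open, and Lemma~\ref{Lemma:making-automorphism-free} applied with $\cU_0 = \{z_1, \ldots, z_n\}$ and $\delta = 1/n$ shows it is dense. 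If some $\alpha \in G_w := \bigcap_n V_w(n)$ satisfied $\pi_{\alpha,\beta}(w) = \Ad(u)$ for a $u \in U(R)$, choose $j$ with $\|u - z_j\|_2 < 1/4$ and take a witnessing projection $p_n$ for $\alpha \in V_w(n)$ with $n$ large; then
\[
\sqrt{2/3 - 1/n} < \|u p_n u^* - z_j p_n z_j^*\|_2 \leq 2\|u - z_j\|_2 < 1/2,
\]
which is absurd. Hence $G_w$ is contained in the set of $\alpha$ for which $\pi_{\alpha,\beta}(w)$ is not inner.

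Finally, since $\langle a, b^{-1}ab\rangle \setminus \{1\}$ is countable, the Baire category theorem in the Polish group $\aut(R)$ gives that $\bigcap_w G_w$ is a dense $G_\delta$, in particular nonempty, and any $\alpha$ in it witnesses the theorem. I do not foresee any serious obstacle: essentially all of the operator-algebraic content has already been isolated in the three preceding lemmas (Rokhlin-type central projections for $\beta$, their scrambling by a given reduced word via residual finiteness of $\Z * \Z_2$, and descent from $R^{\cV} \cap R'$ back to $R$). The only new things requiring care are the continuity step yielding openness of $V_w(n)$ and the $G_\delta$-to-non-inner implication, both of which reduce to the elementary tracial estimate $\|u p u^* - z p z^*\|_2 \leq 2\|u - z\|_2$ used above.
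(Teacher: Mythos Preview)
Your proposal is correct and follows essentially the same route as the paper: define, for each nontrivial word $w$ and each $n$, an open dense set of $\alpha$'s witnessed by a projection separating $\pi_{\alpha,\beta}(w)(p)$ from the first $n$ unitary conjugates of $p$, then intersect via Baire category and finish with the estimate $\|upu^*-zpz^*\|_2\le 2\|u-z\|_2$. The only cosmetic differences are your threshold $2/3-1/n$ in place of the paper's fixed $1/2$, and that you should make explicit that ``$n$ large'' includes $n\ge j$ so that $z_j$ actually occurs among the constraints defining $V_w(n)$.
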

\begin{proof}
	 Let $\cU = \{u_1,u_2,\ldots\}$ be a dense sequence of unitaries in $U(R)$ (in the strong operator topology). Note that $\cU$ spans a SOT-dense subset of $R$. Let $\cU_n = \{u_1,u_2,\ldots,u_n\}$. Let $w \in \left < a,b^{-1}ab  \right > \smallsetminus \{1\}$. Denote by $\mathcal{P}(R)$ the set of all projections in $R$. Define:
	\[
	\cO(w,n) = \{\gamma \in \aut(R) \mid (\exists p \in \mathcal{P}(R))( \forall z \in \cU_n) \|\pi_{\gamma,\beta}(w)(p) - zpz^*\|_2^2>1/2\}
	\]
	By Lemma \ref{Lemma:making-automorphism-free}, the set $\cO(w,n)$ is dense. It is also clearly open. As $\left < a,b^{-1}ab  \right >$ is countable, by the Baire Category Theorem, the set 
	\[
	G_0 = \bigcap_{w \in \left < a,b^{-1}ab  \right > \smallsetminus \{1\}} \bigcap_{n=1}^{\infty} \cO(w,n)
	\] 
	is dense. Pick any $\alpha \in G_0$. We claim that $\pi_{\alpha,\beta}(w)$ is not inner for any $w \in \left < a,b^{-1}ab  \right > \smallsetminus \{1\}$. 
	Indeed, if there exists such a word $w$ and a unitary $u \in U(R)$ which implements $\pi_{\alpha,\beta}(w)$, then we can  pick $u_n \in \cU$ such that $\|u_n - u\|_2 < 1/4$ and  a projection $p$ such that $\|\pi_{\alpha,\beta}(w)(p) - u_npu_n^*\|_2^2>1/2$. Therefore  $\|upu^*-u_npu_n^*\|_2 < 1/2$ and
	\[
	\|\pi_{\alpha,\beta}(w)(p) - u p u^*\|_2 > 1/\sqrt{2} - 1/2 > 0,
	\] so finally $\pi_{\alpha,\beta} \neq \Ad(u)$.
\end{proof}

The following Proposition is based on \cite[Lemma on p.~245]{wassermann1976tensor}. 

\begin{Prop}\label{P:Wassermann}
	Let $M$ be a von Neumann algebra with a faithful trace $\tau$. Suppose $u,v \in U(M)$ are such that for every $w \in \left < a,b \right > \smallsetminus \{1\}$ we have $\tau(\pi_{u,v}(w)) = 0$. Then $\cst(u,v) \cong \cst_r(F_2)$. 
\end{Prop}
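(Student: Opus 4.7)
The plan is to extend the correspondence $a \mapsto u$, $b \mapsto v$ to an isometric $*$-isomorphism $\cst_r(F_2) \to \cst(u,v)$ via a direct GNS comparison. Since $u,v$ are unitaries in $M$, this correspondence extends to a group homomorphism $F_2 \to U(M)$ and then linearly to a $*$-homomorphism $\pi \colon \mathbb{C}[F_2] \to M$, which sends each $w \in F_2$ to $\pi_{u,v}(w)$. The hypothesis, combined with $\tau(\pi_{u,v}(1)) = 1$, says exactly that $\tau \circ \pi$ coincides on $\mathbb{C}[F_2]$ with the canonical trace $\tau_r$ (the coefficient of the identity). Faithfulness of $\tau$ then forces $\pi$ to be injective on $\mathbb{C}[F_2]$, since $\pi(x) = 0$ would give $\tau_r(x^*x) = \tau(\pi(x)^*\pi(x)) = 0$, hence $x = 0$.

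Next I would identify the two GNS Hilbert spaces. Let $N = \pi(\mathbb{C}[F_2])'' \subseteq M$, a tracial von Neumann algebra under $\tau|_N$. The identity
\[
\|\pi(x)\hat 1\|_{L^2(N,\tau)}^2 = \tau(\pi(x^*x)) = \tau_r(x^*x) = \|x\|_{\ell^2(F_2)}^2
\]
shows that $x \mapsto \pi(x)\hat 1$ is an isometry from $\mathbb{C}[F_2] \subseteq \ell^2(F_2)$ onto a dense subspace of $L^2(N,\tau)$, so it extends to a unitary $U \colon \ell^2(F_2) \to L^2(N,\tau)$. Since both the left regular representation $\lambda$ and $\pi$ act by left multiplication on $\mathbb{C}[F_2]$ under these identifications, $U$ intertwines them: $U\lambda(x)U^* = \pi(x)$ as operators on $L^2(N,\tau)$.

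Finally I would transfer norms. Because $N$ is in standard form on $L^2(N,\tau)$, the operator norm of $\pi(x)$ on $L^2(N,\tau)$ equals its $\cst$-norm in $N$, which in turn equals its norm in $M$ (the norm in a von Neumann subalgebra agrees with the ambient one). By the unitary equivalence just established, this norm equals $\|\lambda(x)\| = \|x\|_{\cst_r(F_2)}$. Therefore $\pi$ is isometric for the reduced norm on $\mathbb{C}[F_2]$ and extends by continuity to an isometric $*$-homomorphism $\cst_r(F_2) \hookrightarrow M$ whose image is $\overline{\pi(\mathbb{C}[F_2])} = \cst(u,v)$. I do not anticipate a serious obstacle: the argument is the classical observation that among tracial $*$-representations of $\mathbb{C}[F_2]$, the one coming from the canonical trace is precisely the reduced one, and everything else falls out of matching the two GNS pictures.
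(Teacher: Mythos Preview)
Your proof is correct and follows essentially the same approach as the paper's. Both arguments identify the GNS space of $\tau$ restricted to $\cst(u,v)$ (equivalently, to $N=\pi(\mathbb{C}[F_2])''$) with $\ell^2(F_2)$ via the trace hypothesis, recognize the action there as the left regular representation, and then use faithfulness of $\tau$ (the paper phrases this as the GNS vector being separating; you phrase it as $N$ being in standard form on $L^2(N,\tau)$) to conclude that the compression is isometric. The only cosmetic difference is that the paper compresses inside $L^2(M,\tau)$ to the cyclic subspace $H=\overline{\mathrm{span}\{\pi_{u,v}(w)\xi\}}$, whereas you pass to $L^2(N,\tau)$; these are the same Hilbert space.
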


\begin{proof}
	We view $M$ as represented on $L^2(M,\tau)$ via the standard representation, that is, the GNS representation associated to $\tau$. Let $\xi \in L^2(M,\tau)$ be the GNS vector, so that $\left < x\xi,\xi \right > = \tau(a)$ for all $a \in M$. For any word $w$ in the free group on two generators, set $\xi_w = \pi_{u,v}(w)\xi$. If $w_1 \neq w_2$, then 
	\begin{equation*}
	\begin{split}
	\left < \xi_{w_1} , \xi_{w_2} \right > & = \left < \pi_{u,v}(w_1)\xi , \pi_{u,v}(w_2)\xi \right >  = \left < \pi_{u,v}(w_2)^* \pi_{u,v}(w_1)\xi , \xi \right > 
	\\
	& =  \left < \pi_{u,v}(w_2^{-1}w_1)\xi , \xi \right >  = \tau(\pi_{u,v}(w_2^{-1}w_1)) = 0 .
	\end{split}
	\end{equation*}
	Thus, $H = \overline{ \mathrm{span}\{\xi_w \mid w \in F_2\} } \cong l^2(F_2)$ is invariant for $\cst(u,v)$, and the action of the group generated by $u,v$ on $H$ is unitarily equivalent to the left regular representation. Let $P_H$ be the projection onto $H$, then the map $\varphi \colon \cst(u,v) \to \cst(u,v)P_H$ is a quotient. If $x \in \ker(\varphi)$ then $x P_H = 0$, and in particular $x \xi = 0$. Since $\xi$ is a separating vector, it follows that $x = 0$. Therefore, $\varphi$ is in fact an isomorphism.  
	So, for any $*$-polynomial $p$ in $u,v$, we have $\|p(u,v)\| = \|p(u,v) P_H\| = \|p(u,v)\|_{\cst_r(F_2)}$, so $\cst(u,v) \cong \cst_r(F_2)$, as required.
\end{proof}

We are now ready to prove the first of the two main results of this section, used as steps in the proof of Theorem \ref{Thm:Main}.

\begin{Thm}\label{P:extend} 
		Suppose $\beta$ is either an outer automorphism or an anti\-au\-to\-mor\-phism of $R$.  Then there exists a unital embedding of $R \subset R_1$ in another copy of the hyperfinite $\mathrm{II}_1$-factor such that for any inclusion of~$R_1$ as a subfactor of a larger hyperfinite $\mathrm{II}_1$ factor $R_2$, $\beta$ cannot be extended to an automorphism or an antiautomorphism of $R_2$. 
\end{Thm}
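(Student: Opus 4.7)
The plan is to take $R_1 := R \rtimes_\alpha \Z$, where $\alpha \in \aut(R)$ is the automorphism supplied by Theorem~\ref{Thm:free group}, so that $\alpha$ and $\beta^{-1}\alpha\beta$ generate a free subgroup of $\out(R)$. In particular $\alpha$ is aperiodic and outer in $R$, so by Connes' classification of single automorphisms of $R$ one has $R_1\cong R$. Let $u\in R_1$ denote the canonical implementing unitary, so that $uxu^* = \alpha(x)$ for every $x\in R$.

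Now suppose for contradiction that $R_1\subset R_2$ is a unital inclusion into a hyperfinite \twoone{} factor $R_2$, and that some $\tilde\beta \in \aut(R_2)\cup \ant(R_2)$ extends $\beta$. Set $v := \tilde\beta(u)\in R_2$. Applying $\tilde\beta$ to the identity $uxu^* = \alpha(x)$ and using the (anti)multiplicative property yields $\Ad(v)|_R = \beta\alpha^{\eps}\beta^{-1}=:\gamma$, with $\eps=+1$ in the automorphism case and $\eps=-1$ in the antiautomorphism case. Since conjugation by $\beta$ and inversion of a generator preserve free generation, the pair $\alpha,\gamma$ again generates a free subgroup of $\out(R)$. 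Hence, for every nontrivial reduced word $w$ in $F_2=\langle a,b\rangle$, the unitary $W := \pi_{u,v}(w)\in R_2$ normalizes $R$ and implements on $R$ the outer automorphism $\pi_{\alpha,\gamma}(w)$.

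The key step is to show $\tau(W)=0$ for every such $W$. Let $E\colon R_2\to R$ be the trace-preserving conditional expectation, which exists by finiteness of $R_2$. $R$-bimodularity yields, for every $x\in R$, $E(W)\,x = E(Wx) = E((WxW^*)\,W) = \pi_{\alpha,\gamma}(w)(x)\,E(W)$, so $a:=E(W)\in R$ intertwines $\id_R$ with the outer automorphism $\pi_{\alpha,\gamma}(w)$ of the factor $R$. Then $a^*a\in R$ commutes with all of $R$, hence is scalar by factoriality; a nonzero scalar would produce a unitary intertwiner and so contradict the outerness of $\pi_{\alpha,\gamma}(w)$. Therefore $E(W)=0$ and $\tau(W)=\tau(E(W))=0$. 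Proposition~\ref{P:Wassermann} then gives $\cst(u,v)\cong \cst_r(F_2)$ inside $R_2$, and because the ambient trace restricts to the canonical trace on $\cst_r(F_2)$, the map $\lambda_g \mapsto \pi_{u,v}(g)$ extends to an injective normal $*$-homomorphism from the free group factor $\mathcal L(F_2)$ into $R_2$.

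This gives the desired contradiction: $R_2$ is finite and hyperfinite, so composing a trace-preserving conditional expectation onto any unital von Neumann subalgebra with a CP projection $B(L^2(R_2))\to R_2$ (which exists by injectivity of $R_2$) shows every such subalgebra is injective, hence hyperfinite by Connes' theorem — but $\mathcal L(F_2)$ is famously not hyperfinite. The main obstacle I anticipate is the intertwining argument producing $E(W)=0$ uniformly for all reduced words $w$, and correctly handling both the automorphism and antiautomorphism cases via the single formula $\gamma = \beta\alpha^{\eps}\beta^{-1}$; the rest assembles from Theorem~\ref{Thm:free group}, Proposition~\ref{P:Wassermann}, Connes' classification, and the Murray--von Neumann non-hyperfiniteness of $\mathcal L(F_2)$.
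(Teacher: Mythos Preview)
Your argument is correct and follows the same architecture as the paper's proof: take $R_1=R\rtimes_\alpha\Z$ with $\alpha$ from Theorem~\ref{Thm:free group}, produce a second unitary $v$ from $\tilde\beta$ that implements a conjugate of $\alpha^{\pm 1}$ on $R$, show $\tau(\pi_{u,v}(w))=0$ for every nontrivial $w$, apply Proposition~\ref{P:Wassermann}, and contradict hyperfiniteness of $R_2$.

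The execution differs in two places worth noting. First, you set $v=\tilde\beta(u)$ and obtain $\Ad(v)|_R=\beta\alpha^{\eps}\beta^{-1}$, whereas the paper sets $v=\tilde\beta^{-1}(u)$ (or $\tilde\beta^{-1}(u^*)$) to get $\Ad(v)|_R=\beta^{-1}\alpha\beta$ directly; your extra observation that conjugating the free pair by $\beta$ and possibly inverting a generator preserves freeness in $\out(R)$ is correct and costs nothing. Second, and more substantively, to prove $\tau(W)=0$ you use the trace-preserving conditional expectation $E\colon R_2\to R$ and the standard intertwiner argument ($E(W)x=\theta(x)E(W)$ forces $E(W)=0$ when $\theta$ is outer on the factor $R$). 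The paper instead invokes Connes' Rokhlin-type theorem to find projections $p_0,p_1$ with $p_0+p_1=1$ and $\|p_i\psi(p_i)\|_2<\eps$, and estimates $\tau(W)$ directly. Your route is slicker and avoids the second appeal to Rokhlin-type machinery; the paper's route is more self-contained in that it reuses the same tool already needed for Theorem~\ref{Thm:free group}. Finally, for the contradiction you pass to the von Neumann closure $\mathcal L(F_2)\subset R_2$ and use injectivity of $R_2$ plus non-hyperfiniteness of $\mathcal L(F_2)$; the paper instead cites Brown's result \cite[Corollary 4.2.4]{brown-memoirs} that $\cst_r(F_2)$ admits no embedding into a finite hyperfinite von Neumann algebra --- these are essentially the same fact, packaged differently.
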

\begin{proof}
	We pick $\alpha \in \aut(R)$ as in Theorem \ref{Thm:free group},	 so that  the images of $\alpha$ and $\beta^{-1} \circ \alpha \circ \beta$ in $\out(R)$ generate a free group.
 Since $\alpha$ and all of its nonzero powers are outer, $R_1=R \rtimes_{\alpha} \Z$ is itself isomorphic to $R$ (being an injective $\mathrm{II}_1$-factor itself). We consider the standard embedding $R \subset R \rtimes_{\alpha} \Z$. Suppose $R \rtimes_{\alpha} \Z \subset R_2$ is a normal unital embedding into another copy of the hyperfinite $\mathrm{II}_1$-factor. We show that $\beta$ cannot extend to $R_2$. Suppose, for contradiction, that there exists $\widetilde{\beta} \in \ant(R_2) \cup \aut(R_2)$ such that $\widetilde{\beta}|_{R} = \beta$. Let $u$ be the canonical unitary in $R \rtimes_{\alpha} \Z$. Let $v = \widetilde{\beta}^{-1}(u)$ if $ \widetilde{\beta}$ is an automorphism, and $v = \widetilde{\beta}^{-1}(u^*)$ if~$ \widetilde{\beta}$ is an antiautomorphism. 
	
	We claim that for every $x \in R$ we have $vxv^* = \beta^{-1} \circ \alpha \circ \beta (x)$. To see this, note that if $ \widetilde{\beta}$ is an automorphism then
	\[
	vxv^* = 
		\widetilde{\beta}^{-1}(u) \beta^{-1}(\beta(x)) 	\widetilde{\beta}^{-1}(u^*) =
		\widetilde{\beta}^{-1}(u \beta(x) u^*) = \beta^{-1}(\alpha(\beta(x)))
	\]
	and if $ \widetilde{\beta}$ is an antiautomorphism then
	\[
		vxv^* = 
			\widetilde{\beta}^{-1}(u^*) \beta^{-1}(\beta(x)) 	\widetilde{\beta}^{-1}(u) =
			\widetilde{\beta}^{-1}(u \beta(x) u^*) = \beta^{-1}(\alpha(\beta(x)))
	\]	
Let $w \in F_2$ be a nontrivial word. Let $y = \pi_{u,v}(w)$ and  $\psi = \pi_{\alpha,\beta^{-1} \alpha \beta}(w)$  in $\aut(R)$. We denote the trace on $R_2$ by $\tilde{\tau}$. We claim that $\tilde{\tau}(y) =0$. 

	Note that $\Ad(y)$ leaves $R$ invariant, and for any $x \in R$, we have $\Ad(y)(x) =\psi(x)$. Since $ \psi $ and all of its nonzero powers are outer, using the Connes' Rokhlin-type theorem, 
	\cite[Chapter XVII, Lemma 2.3]{Takesaki-III}, where we pick $n=2$, for any $\eps>0$ there exist orthogonal projections $p_0,p_1 \in R$ such that $p_0 + p_1 = 1$,  $\|p_0\psi(p_0)\|_2 < \eps$, and  $\|p_1\psi(p_1)\|_2 < \eps$.  Thus, 
	\[
	\tilde{\tau}(y) = \tilde{\tau}(yp_0 \cdot p_0) + \tilde{\tau}(yp_1\cdot p_1) =   \tilde{\tau}(\psi(p_0) y p_0) + \tilde{\tau}(\psi(p_1) yp_1)
	\]
	Now, $|\tilde{\tau}(\psi(p_0) y p_0)| = |\tilde{\tau}(yp_0 \psi(p_0))| \leq \|y\| \|p_0 \psi(p_0)\|_2 < \eps$ and likewise $|\tilde{\tau}(\psi(p_0) y p_0)| < \eps$. Since $\eps$ was arbitrary, we have $\tilde{\tau}(y) = 0$. 
	
	We have shown that  	every nontrivial word $w$ satisfies $\tilde{\tau}(\pi_{u,v}(w)) = 0$. Therefore, Proposition~\ref{P:Wassermann} implies that $\cst(u,v) \cong \cst_r(F_2)$. However, by \cite[Corollary 4.2.4]{brown-memoirs}, the \cstar-algebra $\cst_r(F_2)$ does not embed into any finite, hyperfinite von Neumann algebra, which is a contradiction. 
\end{proof}

We move on to the fundamental group. 
For a \twoone{} factor $M$, $n\geq 1$, and a projection $p\in M_n(M)$, the isomorphism type of $pM_n(M)p$ depends only on the trace of $p$,  because in a \twoone{} factor projections of the same trace are unitarily equivalent.   
A representative of this isomorphism type is usually denoted $M^t$, where $t=\tau(p)$. The fundamental group $\cF(M)$ of $M$ is defined as $\{t\mid M^t\cong M\}$ (see e.g., \cite[\S 4.2]{anantharaman2017introduction}).

A small modification of Theorem \ref{P:extend} can be used for trace-scaling isomorphisms, as follows.
\begin{Thm}
\label{Prop:trace-scale-non-extend}
Let $p \in M_n(R)$ be a projection of trace $t \neq 1$. Suppose $\beta \colon R \to pM_n(R)p$ be an isomorphism. Then there exists 
a unital embedding of $R \subset R_1$ in another copy of the hyperfinite $\mathrm{II}_1$-factor such that for any inclusion of~$R_1$ as a subfactor of a larger hyperfinite $\mathrm{II}_1$ factor $R_2$, $\beta$ cannot be extended to an isomorphism $\widetilde{\beta} \colon R_2 \to pM_n(R_2)p$. 
\end{Thm}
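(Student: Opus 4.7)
The plan is to modify the proof of Theorem \ref{P:extend}. After unitarily conjugating $p$ in $M_n(R)$ we may assume it has the form $p = 1_R \otimes p_0$ where $p_0 \in M_n$ is a rank-$k$ projection; then $pM_n(R)p \cong M_k(R)$ with $k < n$ (since $t \neq 1$), and $\beta \colon R \to M_k(R)$ is an isomorphism. The case where $\tau(p)$ is not of the form $k/n$ requires a slight modification using a partial isometry $W \in M_n(R)$ with $W^*W = p$ and $WW^* = \alpha^{(n)}(p)$ to produce $U$, but proceeds along the same lines. For $\alpha \in \aut(R)$, write $\alpha^{(k)} := \alpha \otimes \id_k \in \aut(M_k(R))$.

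The first step is to establish the following analog of Theorem \ref{Thm:free group}: there exists $\alpha \in \aut(R)$ such that $[\alpha]$ and $[\beta^{-1}\alpha^{(k)}\beta]$ generate a free subgroup of $\out(R)$. One way to do this is to reduce to Theorem \ref{Thm:free group} using Connes's classification of automorphisms of $R$: for outer aperiodic $\alpha$ with trivial Connes invariant, $\alpha$ and $\alpha^{(k)}$ are cocycle conjugate, so there is an isomorphism $\phi \colon R \to M_k(R)$ with $[\phi \alpha \phi^{-1}] = [\alpha^{(k)}]$ in $\out(M_k(R))$; setting $\delta := \phi^{-1} \beta \in \aut(R)$, this gives $[\beta^{-1}\alpha^{(k)}\beta] = [\delta^{-1}\alpha\delta]$ in $\out(R)$, and Theorem \ref{Thm:free group} applied to $\delta$ (which can be arranged to be outer by the choice of $\phi$) provides the required $\alpha$. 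Alternatively, a direct Baire-category argument adapting Lemmas \ref{Lemma:Rokhlin projections}--\ref{Lemma:making-automorphism-free} should work, exploiting that $\alpha^{(k)}$ is outer in $M_k(R)$ when $\alpha$ is outer in $R$, and that $u \mapsto u \otimes 1_k$ carries central sequences of $R$ to central sequences of $M_k(R)$.

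Given such $\alpha$, form $R_1 := R \rtimes_\alpha \Z$ with canonical unitary $u$, and suppose for contradiction that $\widetilde\beta \colon R_2 \to pM_n(R_2)p = M_k(R_2)$ extends $\beta$ for some hyperfinite $\mathrm{II}_1$ factor $R_2 \supseteq R_1$. Set $U := u \otimes p_0 \in pM_n(R_2)p$; since $p = 1 \otimes p_0$ commutes with $u \otimes 1_n$, $U$ is a unitary in the corner, and a direct computation shows that $\Ad U$ acts on $pM_n(R_2)p \cong M_k(R_2)$ exactly as $\alpha^{(k)}$. Let $v := \widetilde\beta^{-1}(U) \in R_2$. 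Then for $x \in R$,
\[
\widetilde\beta(vxv^*) = U\beta(x)U^* = \alpha^{(k)}(\beta(x)),
\]
so $\Ad v|_R = \beta^{-1}\alpha^{(k)}\beta$. For any nontrivial $w \in F_2$, the element $y := \pi_{u,v}(w) \in R_2$ normalizes $R$ and restricts to $\psi := \pi_{\alpha, \beta^{-1}\alpha^{(k)}\beta}(w)$, which is outer by the first step. Connes's Rokhlin-type theorem applied to $\psi$ yields $\widetilde\tau(y) = 0$ exactly as in the proof of Theorem \ref{P:extend}. Proposition \ref{P:Wassermann} then gives $\cst(u, v) \cong \cst_r(F_2)$, contradicting \cite[Corollary 4.2.4]{brown-memoirs} since $u,v \in R_2$. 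The main obstacle is the analog of Theorem \ref{Thm:free group} in the first step; once this is in hand, the remainder of the proof is essentially identical to that of Theorem \ref{P:extend}, with the amplified automorphism $\alpha^{(k)}$ playing the role of $\alpha$ inside the conjugation by $\beta$.
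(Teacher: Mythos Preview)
Your overall scheme matches the paper: take a crossed product $R_1$ by a well-chosen $\alpha$, pull back the canonical unitary through $\widetilde\beta$, and derive a copy of $\cst_r(F_2)$ inside a hyperfinite factor via Proposition~\ref{P:Wassermann}. The computations in your third paragraph are correct.

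The gap is in your second paragraph, where you argue for an $\alpha$ with $[\alpha]$ and $[\beta^{-1}\alpha^{(k)}\beta]$ free in $\out(R)$. Your reduction to Theorem~\ref{Thm:free group} is circular: you choose $\phi$ so that $[\phi\alpha\phi^{-1}]=[\alpha^{(k)}]$, set $\delta=\phi^{-1}\beta$, and then apply Theorem~\ref{Thm:free group} to $\delta$ to \emph{produce} $\alpha$---but $\phi$, and hence $\delta$, already depends on $\alpha$. For a $\phi$ fixed in advance there is no reason the $\alpha$ coming out of Theorem~\ref{Thm:free group} should satisfy $[\phi\alpha\phi^{-1}]=[\alpha^{(k)}]$; and the remark that $\delta$ ``can be arranged to be outer by the choice of $\phi$'' runs into the same problem, since $\phi$ is already constrained by $\alpha$. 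Your alternative direct Baire-category route is plausible but not carried out; and the reduction to $p=1\otimes p_0$ with $p_0\in M_n$ only covers rational $\tau(p)$.

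The paper breaks the circularity by a different reduction. It views $\beta$ as arising from a trace-scaling automorphism $\gamma$ of $R\overline{\otimes}B(l^2(\N))$, invokes Connes's result that all such $\gamma$ with the same scaling are conjugate, and---after identifying $R\cong R\otimes R$---puts $\gamma$ in the product form $\delta\otimes\gamma'$ with $\delta\in\aut(R)$ outer and $\gamma'$ trace-scaling on the second factor. The point is that $\delta$ is fixed \emph{before} $\alpha$ is chosen, so Theorem~\ref{Thm:free group} applies to $\delta$ directly. One then takes $R_1=(R\overline{\otimes}R)\rtimes_{\alpha\otimes\id}\Z$ and $v=\widetilde\beta^{-1}(u\otimes\tilde p)$; since $\alpha$ acts only on the first tensor factor while $\gamma'$ involves only the second, the computation yields $v(x\otimes 1)v^*=\delta^{-1}\alpha\delta(x)\otimes 1$, with no amplification $\alpha^{(k)}$ appearing at all. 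From there the argument is identical to that of Theorem~\ref{P:extend}.
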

\begin{proof}
Any such isomorphism $\beta$ arises from a trace-scaling automorphism $\gamma$ of $R \overline{\otimes} B(l^2(\N))$, restricted to $R \cong R \otimes q$, where $q$ is a minimal projection in $B(l^2(\N))$ (and corestricted to the image); here $\gamma(q)$ is a projection of trace~$t$. By \cite[Corollary 6]{Connes-1975}, any two such automorphisms are conjugate. In particular, if we identify $R \cong R \otimes R$, we can assume that $\gamma$ is of the form $\delta \otimes \gamma' \colon R \overline{\otimes} R \overline{\otimes} B(l^2(\N)) \to  R \overline{\otimes} R \overline{\otimes} B(l^2(\N))$, where $\delta \in \aut(R)$ is an outer automorphism and $\gamma' \colon  R \overline{\otimes} B(l^2(\N)) \to  R \overline{\otimes} B(l^2(\N))$ is a trace-scaling automorphism. Let $\tilde{p} = \gamma'(1 \otimes q)$, so that $p = 1 \otimes \tilde{p} = \gamma(1 \otimes 1 \otimes q)$.  Following the same argument as in the proof of Theorem \ref{P:extend}, we find $\alpha \in \aut(R)$ such that  $\left < \alpha , \delta^{-1} \circ \alpha \circ \delta \right > \cong F_2$. Now, let 
\[
R_1 = (R \overline{\otimes} R) \rtimes_{\alpha \otimes \id} \Z \cong (R \rtimes_{\alpha} \Z) \overline{\otimes} R
. 
\]
  Suppose $R_2 \supset R_1$ and $\beta$ extends to an isomorphism $\widetilde{\beta} \colon R_2 \to p R_2 \overline{\otimes} B(l^2(\N)) p$. Let $u$ be the canonical unitary in the crossed product $R \rtimes_{\alpha} \Z$ and  set $v = \widetilde{\beta}^{-1}(u \otimes \tilde{p})$. A  computation similar to that in the proof of Theorem \ref{P:extend} shows that for any $x \in R$, we have 
 \[
 v(x \otimes 1)v^* = \delta^{-1} \circ \alpha \circ \delta (x) \otimes 1 \in R \overline{\otimes} R \subset R_1 \subset R_2
 .
 \]
 The same considerations now show that $C^*(u \otimes 1,v) \cong C^*_r(F_2)$, which is a contradiction.
\end{proof}

In the next section we describe the recursive construction of the \twoone{} ~$M$ as in the conclusion of Theorem~\ref{Thm:Main} using the obstructions to extending $\beta$ provided by Theorem~\ref{P:extend} and Theorem~\ref{Prop:trace-scale-non-extend}. Notably, these obstructions are `irreversible' in the sense that $\beta$ cannot be extended to any further hyperfinite extension. This should be contrasted to the `fleeting' obstructions used in \cite{AkeWe:Consistency}, \cite{farah2016simple}, and \cite{vaccaro2017trace} where at each step of the construction one had to take care of all objects captured in the earlier stages of the construction.     

\section{The construction}

Following von Neumann,  an ordinal is identified with  the set of all smaller ordinals. By $\aleph_1$ we denote the first uncountable ordinal, identified with the first uncountable cardinal. 

As in \cite{farah2016simple} and \cite[\S 11.2]{Fa:STCstar}, our construction will utilize codes for metric structures\footnote{This is a technical term, following \cite{BYBHU}.} but the coding used here is somewhat simplified. 
Suppose~$d$ is a metric on an ordinal $\theta$ of diameter $2$ and $A$ is its metric completion. Let 
\[
\Code_d(A)=\{(\xi,\eta, q)\in \theta^2\times \Q_+\mid d(\xi,\eta)>q\}. 
\]
Since $\Code_d(A)$ uniquely determines the metric $d$ on $\xi$ and $A$ is isometric to the metric completion of this space, we consider $\Code_d(A)$ as a code for the metric space~$(A,d)$ (we will routinely omit $d$, when clear from the context). 
The set  $\Xm(\theta)$ of such codes is included in the power set of  $\theta^2\times \Q_+$.   
For every~$A$ coded in $\Xm(\theta)$, every 1-Lipschitz function $F\colon A^2\to [0,2]$ is coded by 
\[
\Code_F(A)=\{(\xi, \eta, q)\in \theta^2\times \Q_+\mid F(\xi,\eta)>q\}. 
\] 
Hence the pair $(A,F)$ is coded by a subset of $\theta^2\times \Q_+\sqcup \theta^2\times \Q_+$. 
Let $\XmR(\theta)$ denote the set of all such codes.

\begin{Lemma}\label{L:Struct} The sets of codes $\Xm(\xi)$ and $\XmR(\xi)$  satisfy the following for every infinite $\xi$. 
 \begin{enumerate}
\item \label{I.proj} There is a natural reduction from $\XmR(\xi)$ onto $\Xm(\xi)$, so that the reduct of a code for $(A,F)$ is a code for $A$ (with the same enumeration of the distinguished dense set). 
\item \label{I.exp1} If $A$ is coded by  $\fA\in\Xm(\xi)$ and $F\colon A^2\to [0,2]$ is 1-Lipschitz, 
then~$\fA$ has a unique expansion $\fA(F)$   in $\XmR(\xi)$ that codes $(A,F)$ (so that the reduct of $\fA(F)$ as in \eqref{I.proj} is $\fA$).  
\pushcounter\end{enumerate}
If $\xi<\eta$,  $\fA\in \Xm(\eta)$, and $\fA'\in \XmR(\eta)$,   then there are  unique $\fA\rs \xi\in \Xm(\xi)$ and $\fA'\rs \xi\in \XmR(\xi)$ with the following properties. 
\begin{enumerate}
\popcounter
\item \label{I.extend} If $A$ is coded by $\fA\in \Xm(\xi)$,  $\xi$ is countable, $A$ is a subspace of a separable metric space $B$ of diameter 2, then there is\footnote{By $\omega$ we denote the least limit ordinal, and therefore $\xi+\omega$ is the least limit ordinal greater than $\xi$.}  $\fB\in\Xm(\xi+\omega)$ such that $\fB\rs\xi=\fA$ and $\fB$ codes $B$.
\item \label{I.inductive} If  $S$ is set of ordinals and  $\fA_\xi\in \Xm(\xi)$, for $\xi\in S$, are such that $\fA_\eta\rs \xi=\fA_\xi$ for all $\xi<\eta$ in $S$, then there is a unique  $\fA\in \Xm\rs \sup S$ such that $\fA\rs \xi=\fA_\xi$ for all~$\xi\in S$. 
\item Statements analogous to \eqref{I.extend} and \eqref{I.inductive} hold when $\Xm$ is replaced with~$\XmR$. 
\end{enumerate}
\end{Lemma}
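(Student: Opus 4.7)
The plan is to treat each clause as a routine bookkeeping exercise about the coding device, exploiting the key observation that a code in $\Xm(\xi)$ \emph{is} the complete record of the metric on the enumerated dense set $\{a_\alpha : \alpha < \xi\}$, and likewise for $\XmR(\xi)$. I would begin by fixing the definition of restriction $\fA \rs \xi := \fA \cap (\xi^2 \times \Q_+)$ for $\fA \in \Xm(\eta)$ with $\xi < \eta$, and componentwise for $\fA' \in \XmR(\eta)$; existence and uniqueness of restrictions are then built into this definition.

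For (1), I would let the reduction $\XmR(\xi) \to \Xm(\xi)$ be projection onto the first disjoint summand $\xi^2 \times \Q_+$. Since a code for $(A,F)$ is by construction the disjoint union $\Code_d(A) \sqcup \Code_F(A)$, its image is $\Code_d(A)$, a code for $A$ alone. For (2), I would define $\fA(F) := \fA \sqcup \Code_F(A)$, where $\Code_F(A)$ is computed with respect to the enumeration already fixed by $\fA$. Uniqueness is immediate, since any $1$-Lipschitz function on $A^2$ is determined by its restriction to the dense subset $\{a_\alpha\}^2$, and $\Code_F(A)$ records precisely this restriction.

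For (3), I would use separability of $B$ to pick an enumeration $\{b_n : n < \omega\}$ of a countable dense subset of $B$, set $c_\alpha := a_\alpha$ for $\alpha < \xi$ and $c_{\xi + n} := b_n$, and let $\fB$ be the resulting code $\{(\alpha,\beta,q) \in (\xi+\omega)^2 \times \Q_+ : d_B(c_\alpha, c_\beta) > q\}$; then $\fB \rs \xi = \fA$ by construction. For (4), I would set $\fA := \bigcup_{\xi \in S} \fA_\xi \subseteq (\sup S)^2 \times \Q_+$; compatibility of the $\fA_\xi$ ensures consistency, so $\fA$ is a bona fide code on $\sup S$, and uniqueness is forced by the restriction conditions. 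Part (5) then follows by applying the same constructions componentwise in $\XmR$.

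The only step with any delicacy is (4) when $\sup S$ is a limit ordinal not in $S$: here one must verify that every pair $(\alpha,\beta)$ with $\alpha, \beta < \sup S$ lies inside $\xi^2$ for some $\xi \in S$, which is exactly cofinality of $S$ below $\sup S$, and the compatibility hypothesis then makes the assigned distance independent of the choice of $\xi$. Every other step reduces directly to unpacking the definitions of the codes.
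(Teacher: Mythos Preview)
Your proposal is correct and follows essentially the same approach as the paper; the only difference is that the paper dispatches clauses (1), (2), (4), and (5) by identifying $\Xm(\xi)$ and $\XmR(\xi)$ as instances of the general coding framework $\Struct(\calL,\xi)$ from \cite[\S 7.1.2]{Fa:STCstar} and declaring them ``even more straightforward,'' while you unpack the same bookkeeping by hand. For clause (3) your argument---extending the given enumeration of a dense subset of $A$ by the interval $[\xi,\xi+\omega)$ to reach a dense subset of $B$---is exactly what the paper does.
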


\begin{proof} The spaces $\Xm(\xi)$ and $\XmR(\xi)$ are instances of  $\Struct(\calL,\xi)$ for metric structures with a distinguished dense set indexed by $\xi$ as introduced in \cite[\S 7.1.2]{Fa:STCstar}, where $\calL$ is the single-sorted language with a single binary predicate symbol for $R$ whose modulus of uniform continuity is the identity function. 

To see that  \eqref{I.extend} holds, note that since $A$ is separable and the interval $[\xi,\xi+\omega)$ is infinite, one can extend the given enumeration of a dense subset of $A$ by $\xi$  to an enumeration of a dense subset of $B$ by~$\xi+\omega$.   
The proofs of the remaining clauses are even more straightforward. 
\end{proof}
		
		The unit ball $N_1$ of a \twoone{} factor $N$ with a separable predual with respect to a trace metric is a complete separable metric space of diameter 2, and if $\beta\colon N\to N$ is an automorphism, antiautomorphism, or an isomorphism onto a corner, then~$\beta$ can be coded by the distance function to its graph, denoted 
$
F_\beta \colon (N_1)^2\to [0,2]
$ and 
defined by 
\begin{equation} 
\label{Eq.Rbeta}
F_\beta(a,b)=\inf_{x\in N_1} \max(\|x-a\|_2, \|\beta(x)-b\|_2). 
\end{equation}
Clearly, $F_\beta$ is 1-Lisphitz.  

The following standard definitions can be found in \cite[\S III.6]{kunen2011set} or in  \cite[\S 6.2]{Fa:STCstar}. 
A subset $C$ of $\aleph_1$  is called closed and unbounded (\emph{club} for short) if $C\setminus \xi$ is nonempty for every $\xi<\aleph_1$ and for every countable $X\subset C$  we have $\sup(X)\in C$. A subset $S$ of $\aleph_1$  is stationary if it intersects every club nontrivially. We will not need the  exact statement of Jensen's $\doo$; it can be found e.g., in \cite[\S III.7.1]{kunen2011set} or \cite[\S 8.3.1]{Fa:STCstar}.

\begin{Prop}
\label{P:Diamond} Jensen's $\doo$ implies the following. 

There  exist $\sfS_\xi\in \XmR(\xi)$ for $\xi<\aleph_1$ such that for every $\fA\in \XmR(\aleph_1)$ the set 
$
\{\xi<\aleph_1\mid \fA\rs \xi=\sfS_\xi\}
$
is stationary. 

In particular, this statement is  relatively consistent with ZFC.  
\end{Prop}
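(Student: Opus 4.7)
The plan is to derive the statement from the classical formulation of $\doo$ via a routine coding argument. Recall the standard $\doo$: there is a sequence $(A_\xi)_{\xi<\aleph_1}$ with $A_\xi\subseteq \xi$ such that for every $A\subseteq \aleph_1$ the set $\{\xi<\aleph_1\mid A\cap\xi=A_\xi\}$ is stationary. Since $\XmR(\aleph_1)$ is by definition a family of subsets of the set $X:=\aleph_1^2\times\Q_+\sqcup \aleph_1^2\times \Q_+$, and $|X|=\aleph_1$, I would transfer the $\doo$-sequence along a well-chosen bijection between $\aleph_1$ and $X$.

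First I would fix a bijection $f\colon \aleph_1\to X$ with the property that
\[
C:=\{\xi<\aleph_1\mid f[\xi]=\xi^2\times\Q_+\sqcup \xi^2\times \Q_+\}
\]
is a club subset of $\aleph_1$. Such an $f$ exists by standard enumeration arguments: the map $(\alpha,\beta)\mapsto$ (the $n$-th rational) provides a natural bijection of $\xi^2\times\Q_+$ with $\xi$ for every infinite $\xi$, the collection of $\xi<\aleph_1$ closed under these bijections is a club, and a straightforward recursion produces $f$ respecting these restrictions on a club.

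Given the $\doo$-sequence $(A_\xi)$, I would define $\sfS_\xi:=f[A_\xi]$ whenever this yields an element of $\XmR(\xi)$, and otherwise set $\sfS_\xi$ to a fixed default code (for example, the code of a singleton metric space equipped with the zero function). To verify the conclusion, let $\fA\in \XmR(\aleph_1)$ be arbitrary and set $A:=f^{-1}[\fA]\subseteq \aleph_1$. Applying $\doo$ to $A$ yields a stationary set $T\subseteq \aleph_1$ with $A\cap \xi=A_\xi$ for all $\xi\in T$. Since $C$ is a club, $T\cap C$ is stationary, and for each $\xi\in T\cap C$ one has
\[
\sfS_\xi=f[A_\xi]=f[A\cap \xi]=\fA\cap f[\xi]=\fA\cap (\xi^2\times\Q_+\sqcup \xi^2\times \Q_+)=\fA\rs \xi
\]
by the definition of the restriction in Lemma~\ref{L:Struct}.

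The only real point requiring attention is the existence of the bijection $f$ with the coherence property on a club, but this is a standard set-theoretic bookkeeping exercise and is not a genuine obstacle. The relative consistency of the conclusion with ZFC is immediate once one knows the relative consistency of $\doo$ itself, which holds for instance in G\"odel's constructible universe~$L$.
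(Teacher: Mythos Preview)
Your argument is correct and is precisely the standard coding argument that underlies the result the paper cites. The paper's own proof consists of a single sentence invoking \cite[Proposition~8.3.8]{Fa:STCstar} together with the relative consistency of $\doo$ from \cite[\S III.7.13]{kunen2011set}; what you have written is a self-contained proof of the relevant special case of that proposition, so there is no genuine difference in approach---you have simply unpacked the reference.
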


\begin{proof} 
This is a consequence of a special case of \cite[Proposition~8.3.8]{Fa:STCstar} and the relative consistency of  $\doo$ with ZFC (\cite[\S III.7.13]{kunen2011set}). 
\end{proof}

We are now ready to prove Theorem \ref{Thm:Main}:

\begin{proof}[Proof of Theorem~\ref{Thm:Main}]  
	 We use $\diamondsuit_{\aleph_1}$ to construct a  \twoone{} factor that is not isomorphic to its opposite, has no outer automorphisms, has trivial fundamental group, is hyperfinite, is of density character $\aleph_1$, and such that the fundamental group of its ultrapower associated with any nonprincipal ultrafilter on $\N$ is equal to $(0,\infty)$.  
	 
Fix  $\sfS_\xi$, for $\xi<\aleph_1$ as guaranteed by Proposition~\ref{P:Diamond}. 
We will recursively build hyperfinite \twoone{} factors with separable predual $R_\xi$, for an infinite ordinal $\xi<\aleph_1$, and codes $\fA_\xi\in \Xm(\xi)$,  for limit $\xi\leq \aleph_1$, with the following properties ($F_\beta$ is as defined in \eqref{Eq.Rbeta}) 
\begin{enumerate}
\item If $\xi<\eta$ then $R_\xi$ is a subfactor of $R_\eta$  and for a limit ordinal~$\eta$ we have $R_\eta=\lim_{\xi<\eta} R_\xi$. 
\item If $\eta<\aleph_1$ is a limit ordinal, then a distinguished dense subset of the unit ball~$(R_\eta)_1$ of~$R_\eta$ in the trace metric is enumerated by $\eta$ and     $\fA_\eta\in \Xm(\eta)$ is the corresponding code for  $(R_\eta)_1$. 
\item For limit ordinals $\xi<\eta$ we have $\fA_\eta\rs \xi=\fA_\xi$.  
\item \label{MainStep} If  $\sfS_\xi=\fA_\xi(F_\beta)$  for some   $\beta$ which is an antiautomorphism,  an outer automorphism, or an isomorphism of $R_\xi$ onto a corner $p R_\xi p$ for some projection with $\tau(p)<1$, then $R_{\xi+1}$ is  $R_1$ as guaranteed by Theorem~\ref{P:extend} or \ref{Prop:trace-scale-non-extend}. 
\end{enumerate}
Starting from $R_\omega=R$,\footnote{As is standard in Set Theory, the letter $\omega$ denotes the first infinite countable ordinal; please note that in this paper $\omega$ does not stand for an ultrafilter and that $R_\omega$ does not stand for the central sequence algebra $R^\omega\cap R'$.}   the recursive construction proceeds as follows. Suppose that $\eta$ is the minimal ordinal such that $R_\eta$ hasn't been defined yet. 

If $\eta$ is a limit ordinal, let $R_\eta=\lim_{\xi<\eta} R_\xi$. If $\eta$ is also a limit of limits, then $\fA_\xi$ is defined for a cofinal set of $\xi<\eta$ and we let $\fA_\eta=\lim_{\xi<\eta} \fA_\xi$, as guaranteed by Lemma~\ref{L:Struct} \eqref{I.inductive}. 
Otherwise, let $\xi$ be the largest limit ordinal below $\eta$.   
Then $\eta=\xi+\omega$ and we let $\fA_{\eta}$ be a code for $R_\eta$ that extends~$\fA_\xi$ as guaranteed by Lemma~\ref{L:Struct} \eqref{I.extend}. 

Otherwise, $\eta$ is a successor ordinal. Fix $\xi$ such that 
 $\eta=\xi+1$. 
 
 Consider the case when   $\fA_\xi$ is defined  and there is  $\beta$ which is an antiautomorphism of $R_\xi$, an outer automorphism of $R_\xi$, or an isomorphism of~$R_\xi$ onto~$p R_\xi p$ for some projection $p$ of trace $< 1$ and $\sfS_\xi=\fA_\xi(F_\beta)$ (as in Lemma~\ref{L:Struct}  \eqref{I.exp1}).  Then let $R_\eta$ be as guaranteed by Theorem~\ref{P:extend} or Theorem~\ref{Prop:trace-scale-non-extend}, so that $\beta$ does not extend to any hyperfinite extension of $R_\eta$. This assures the requirement \eqref{MainStep} of the construction. 

In the case when  $\fA_\xi$ is not defined,  or $\fA_\xi$ is defined but $\sfS_\xi$ does not code a structure  $\fA_\xi(F_\beta)$ as in the previous case,  let $R_\eta=R_\xi$.

This describes the recursive construction. Let $M=\lim_{\xi<\aleph_1} R_\xi$ and let $\fA=\lim \fA_\eta$, as guaranteed by \eqref{I.inductive} of Lemma~\ref{L:Struct}. Then $M$ is  hyperfinite, as an inductive limit of hyperfinite \twoone{} factors and its predual has density character $\aleph_1$.

Suppose $\beta$ is an outer automorphism of $M$. 

We first claim that the set $\sfC_0=\{\xi<\aleph_1\mid \beta\rs R_\xi \in \aut(R_\xi)\}$
is a club. To see that, define a non-decreasing function $f: \aleph_1 \to \aleph_1$ by 
\[
f(\xi) = \min\{\eta < \aleph_1 \mid \beta[R_{\xi}] \cup \beta^{-1}[R_{\xi}] \subseteq R_{\eta}\} .
\]
The function $f$ is well-defined since each $R_{\xi}$ is separable in the $\|\cdot\|_2$-norm. The set $\sfC_0$ is the set of fixed points of $f$, and therefore is a club. (That the set of fixed points of such functions is a club is well-known; see for example \cite[Example 6.2.8(1)]{Fa:STCstar}.)

Next, we claim that  the set $\sfC_1=\{\xi\in \sfC_0\mid \beta\rs R_\xi$ is outer$\}$ 
also contains a club. The proof of this fact is essentially identical to the proof of the analogous statement for \cstar-algebras given in \cite[Proposition~7.3.9]{Fa:STCstar}, but we include the proof for reader's convenience. This proof uses the notion of a club in an arbitrary poset, defined  in \cite[Definitions 6.2.6]{Fa:STCstar} and  the poset of separable substructures of a metric structure,  defined in  \cite[Definition~7.1.8]{Fa:STCstar}. The supremum of an increasing sequence in the  poset of separable substructures is the closure of its union (typically strictly larger than the union), and this fact makes the proofs more involved than in the standard, discrete, case. 

To prove that $\sfC_1$ contains a club, first note that $\tilde\sfC_0=\{R_{\xi} \mid \xi \in \sfC_0\}$ is a club of separable substructures of~$M$ considered  as a metric structure (every tracial von Neumann algebra is naturally identified with a metric structure, see \cite[\S 2.3.2]{FaHaSh:Model2}).    
Assume for contradiction that $\sfC_1$ does not contain a club, so $\sfC_0 \setminus \sfC_1$ is stationary. Since the function $\xi\mapsto R_\xi$ is an order-isomorphism between $\sfC_0$ and $\tilde \sfC_0$, the set  $\{R_\xi\mid \xi\in \sfC_0\setminus \sfC_1\}$ is stationary as well. For each $\xi \in \sfC_0 \setminus \sfC_1$, by our assumption, $\beta|_{R_{\xi}}$ is inner, so we can choose a unitary $u_{\xi} \in R_{\xi}$ such that $\beta|_{R_{\xi}} = \Ad(u_{\xi})$. The function $R_{\xi} \mapsto u_{\xi}$ is regressive (this simply means that $u_{\xi} \in R_{\xi}$, see \cite[Definition 7.2.1]{Fa:STCstar}). By \cite[Proposition 7.2.9]{Fa:STCstar}, there exists $u \in M$ such that for any $\eps>0$, the set
\[
\{R_{\xi} \mid \|u - u_{\xi} \|_2 < \eps\}
\]
is stationary. From here it follows that $\beta = \Ad(u)$. To see this, note that for any $a$ in the unit ball of~$M$ and for any $\eps>0$, pick $\xi$ such that $a \in R_{\xi}$ and $R_{\xi}$ is in the stationary set above, so $\|\Ad(u)(a) - \beta(a)\|_2<2\eps$. Since $\eps>0$ is arbitrary, it follows that $\beta = \Ad(u)$. This
 is a contradiction. Therefore, the set $\sfC_1$ contains a club, as claimed.

By the choice of $\sfS_\xi$, since the limit ordinals form a club, there is a limit ordinal $\xi\in \sfC_1$  such that $\fA_\xi ( F_{\beta\rs R_\xi})=\sfS_\xi$. 
By case \eqref{MainStep} of the construction, the subfactor $R_{\xi+1}$ of $M$  has the property that for any larger hyperfinite~\twoone{} factor~$N$, no $\tilde\beta\in \aut(N)$ extends $\beta\rs R_\xi$. However, $M$ and $\beta$ have this property; contradiction.  

Now suppose that $\beta$ is an antiautomorphism. As before,  the set 
$
\sfC=\{\xi<\aleph_1\mid \beta\rs R_\xi \in \ant(R_\xi)\}
$
is a club. Thus there exists $\xi\in \sfC$ such that $\sfS_\xi=\fA_{\xi}(F_{\beta\rs R_\xi})$.
By case \eqref{MainStep} of the construction, the subfactor $R_{\xi+1}$ of $M$  has the property that for any larger hyperfinite~\twoone{} factor~$N$, no $\tilde\beta\in \ant(N)$ extends $\beta\rs R_\xi$; contradiction.

Finally,  suppose that  $\cF(M)\neq \{1\}$. Since $\cF(M)$ is a multiplicative group,  there exists $0<t<1$ and an isomorphism $\beta\colon M\to pMp$ for some $p$ with $\tau(p)<1$. Since any two projections with the same trace are unitarily equivalent, we may assume without loss of generality that $p \in R_{\omega}$, so that $p \in R_{\xi}$ for all infinite ordinals $\xi$ we consider.
By the same argument as above, the set 
\[
\sfC=\{\xi<\aleph_1\mid \beta\rs R_\xi\text{ is an isomorphism onto } p R_\xi p\}
\]
is a club. 
Thus there is a limit ordinal~$\xi\in \sfC$  such that $\fA_\xi (F_{\beta\rs R_\xi})=\sfS_\xi$. 
By case \eqref{MainStep} of the construction, the subfactor $R_{\xi+1}$ of $M$  has the property that for any larger hyperfinite~\twoone{} factor~$N$, no $\tilde\beta$ extends $\beta\rs R_\xi$; contradiction.

It remains to prove $\cF(M^\cV)=(0,\infty)$ for any nonprincipal ultrafilter $\cV$ on~$\N$. 
 Since $\cF(R)=(0,\infty)$ and an isomorphism of $R$ with its corner $R^t$ extends to an isomorphism of $R^\cV$ with  $(R^t)^\cV\cong (R^\cV)^t$, it will suffice to prove that $M^\cV\cong R^\cV$.
This is a consequence of a standard model-theoretic fact. By \cite[Corollary~16.4.2]{Fa:STCstar}, we know that $M^\cV$ and $R^\cV$ are countably saturated (\cite[Definition 16.1.5]{Fa:STCstar}). Since $\doo$ implies the Continuum Hypothesis, both $M^\cV$ and $R^\cV$ have density character $\aleph_1$, and therefore are in fact saturated.
By construction,  $M$ is the inductive limit of   $R_\xi$, for $\xi<\aleph_1$. Every $R_\xi$ is isomorphic to $R$, and for a countable limit ordinal $\eta$ we have that  $R_\eta$ is the  $\|\cdot\|_2$-closure of $\bigcup_{\xi<\eta}R_\xi$. By the Downwards L\"owenheim--Skolem Theorem (e.g., \cite[Theorem~7.1.9]{Fa:STCstar}), some $R_\xi$ is an elementary submodel of $M$ in the language of tracial von Neumann algebras.\footnote{As a matter of fact, \emph{every} $R_\xi$ is an elementary submodel of $M$ but we don't have to use this result.} Therefore $R_\xi^\cV\cong M^\cV$ (e.g., 	\cite[Corollary~16.6.5]{Fa:STCstar}). Since $R\cong R_\xi$,  this concludes the proof. 
\end{proof}

\begin{Rmk}
We do not know whether Theorem~\ref{Thm:Main}, or the main result of any of  \cite{AkeWe:Consistency}, \cite{farah2016simple}, or \cite{vaccaro2017trace}, can be proved in ZFC alone, or from the Continuum Hypothesis. In \cite{calderon2019can} it was shown that the conclusions of the main results from \cite{AkeWe:Consistency}, \cite{farah2016simple}, and \cite{vaccaro2017trace} all hold in many models of the Continuum Hypothesis in which $\doo$ fails. 

We don't know whether in ZFC one can prove that $\cF(M^\cV)=(0,\infty)$ for every hyperfinite \twoone{} factor $M$ with predual of density character $\aleph_1$ and every ultrafilter $\cV$ on $\N$ (although this is certainly true for $R$).  We conjecture that this is not necessarily the case and that the results of  \cite{shelah1992vive}, showing that ultrapowers of countable, elementarily equivalent,  structures associated with nonprincipal ultrafilters on $\N$ need not be isomorphic,  may be relevant.  
\end{Rmk}

\bibliographystyle{alpha}
\bibliography{nonseparable-type-II}
\end{document}